\pdfoutput=1
\documentclass[11pt,a4paper]{amsart}

\usepackage[utf8]{inputenc}

\usepackage[left=1in,right=1in,top=1in,bottom=1in]{geometry}

\usepackage{amssymb}
\usepackage{amsmath}
\usepackage{amsthm}
\usepackage{thmtools}
\usepackage{mleftright}
\allowdisplaybreaks

\usepackage{stmaryrd}

\usepackage{tikz}
\usepackage{tikz-cd}

\usepackage{enumitem}

\newlist{enumalpha}{enumerate}{1}
\setlist[enumalpha, 1]{
label=(\alph*)
}

\usepackage{mathtools}

\usepackage[hypertexnames=false,hidelinks]{hyperref}

\usepackage[english,noabbrev,sort]{cleveref}

\usepackage{my-math-definitions} 

\newtheorem{theorem}{Theorem}[section]
\newtheorem{lemma}[theorem]{Lemma}
\newtheorem{corollary}[theorem]{Corollary}
\theoremstyle{definition}
\newtheorem{definition}[theorem]{Definition}
\theoremstyle{remark}
\newtheorem{example}[theorem]{Example}
\newtheorem{remark}[theorem]{Remark}

\numberwithin{equation}{section}

\DeclareMathOperator{\jump}{exjump}

\DeclareMathOperator{\asc}{asc}
\DeclareMathOperator{\triv}{triv}
\newcommand{\Fasc}{F^{\asc}}
\newcommand{\J}{\mathbb J}

\author{Fabian Gundlach}
\address{Universität Paderborn, Fakultät EIM, Institut für Mathematik, Warburger Str.~100, 33098 Paderborn, Germany.}
\email{fabian.gundlach@uni-paderborn.de}
\title{Multivariate counting of wild abelian extensions}
\subjclass{11R45, 11R37, 11S40, 30B10, 11S15}

\begin{document}

\begin{abstract}
Let $K$ be a global function field of characteristic~$p$ and let $G$ be a finite abelian group of exponent $p^e$.
We show that the multivariate generating function counting sub-$G$-extensions of $K$ with respect to $e$ specific height functions (encoding successive drops in the exponents of the higher ramification groups) is rational.
\end{abstract}

\maketitle

\section{Introduction}

Let $G$ be a finite abelian group and let $K$ be a global function field of characteristic $p$, i.e., the function field of a smooth projective geometrically irreducible curve $C_K$ over some finite field $\F_q$. Denote by $\Hom(\Gamma_K, G)$ the set of (not necessarily surjective) continuous group homomorphisms from the absolute Galois group of $K$ to the group $G$. We will call the elements of this set \emph{sub-$G$-extensions} of $K$.

Sub-$G$-extensions have been counted by various \emph{inertial invariants} / \emph{inertial height functions}, such as by the degree of the conductor divisor (see \cite{wood-probabilities-of-local-behaviors} for abelian groups of order coprime to~$p$, \cite{lagemann-artin-schreier-witt} for abelian $p$-groups, and \cite{he-equidistribution} for arbitrary abelian groups), discriminant divisor (see \cite{wright-counting-abelian-extensions} for abelian groups of order coprime to $p$ and \cite{potthast-elementary-abelian} for elementary abelian $p$-groups), and other invariants (see \cite{wood-probabilities-of-local-behaviors}, \cite{tavernier} for abelian groups of order coprime to~$p$, and \cite{gundlach-abelian-extensions-by-artin-schreier} for abelian $p$-groups). 
If $p$ is coprime to the order of $G$, then any local function field of characteristic $p$ has only boundedly many sub-$G$-extensions, and the space of reasonable \emph{inertial height functions} is finite-dimensional.
On the other hand, if $p$ divides the order of $G$, then any local function field of characteristic~$p$ has infinitely many sub-$G$-extensions, and the space of inertial height functions is infinite-dimensional. One may wonder for which inertial height functions the resulting counting problem has the cleanest answers.

We now assume that $G$ has exponent $p^e$. In \cite{gundlach-abelian-extensions-by-artin-schreier}, it was shown that if the function field $K$ is rational, then for one particular choice of inertial height function, namely the \emph{Artin--Schreier conductor}, the resulting generating function has the unusual property of being a rational function. In particular, this gives an exact (not just asymptotic) solution to the global counting problem.

In this article, we improve on this in two directions: (a) We remove the assumption that $K$ is rational. (b) In the spirit of \cite[page~164]{ellenberg-venkatesh-counting-galois-extensions} and \cite{gundlach-multiple-invariants}, and generalizing \cite[Remark~4.6]{gundlach-abelian-extensions-by-artin-schreier}, we count simultaneously by $e$ carefully chosen inertial height functions and show that the resulting multivariate generating function is again rational.

To define the height functions, we first associate to any $\varphi\in\Hom(\Gamma_K,G)$ the following divisors on the curve $C_K$:
\[
	\jump_i(\varphi)
	:= \sum_{P\textnormal{ place of }K}
	\inf\{v\in\R_{\geq0} : \varphi(\Gamma_{K_P}^v) \subseteq G[p^i]\} \cdot P
	\ \in\ \Div(C_K)
	\qquad\textnormal{for }i=0,1,\dots,
\]
where $\Gamma_{K_P}^v \trianglelefteq \Gamma_{K_P} \subseteq \Gamma_K$ denotes the $v$-th higher ramification subgroup at~$P$ (in upper numbering) and $G[p^i]$ denotes the $p^i$-torsion subgroup of $G$. Note that the divisors $\jump_i(\varphi)$ encode the exponents of the ramification subgroups since $\varphi(\Gamma_{K_P}^v)\subseteq G[p^i]$ is equivalent to $\varphi(\Gamma_{K_P}^v)$ having exponent dividing~$p^i$.

These divisors satisfy $\jump_i(\varphi)=0$ for all $i\geq e$ and $\jump_i(\varphi) \geq p\cdot\jump_{i+1}(\varphi)$ for all $i\geq0$. (See \Cref{lem:jump-restrictions} for a proof of this well-known fact.)
It is thus natural to perform the invertible change of variables
\[
	(\jump_0,\dots,\jump_{e-1},\underbrace{\jump_e}_0,\dots)\mapsto({\jump_0} - p\jump_1,\dots,{\jump_{e-1}}-p\jump_e),
\]
which gives rise to $e$ effective divisors $\jump_i(\varphi)-p\cdot\jump_{i+1}(\varphi)$ on the curve $C_K$.
We will count sub-$G$-extensions $\varphi\in\Hom(\Gamma_K,G)$ simultaneously by the $e$ inertial height functions
\[
	\varphi \mapsto \deg(\jump_i(\varphi)-p\cdot\jump_{i+1}(\varphi))
	\qquad\textnormal{for $i=0,\dots,e-1$}.
\]
Our main result is as follows:

\begin{theorem}[see \Cref{thm:main} for a more precise statement]\label{thm:intro-main}
	For any finite abelian group~$G$ of exponent $p^e$ and any global field~$K$ of characteristic~$p$, the multivariate generating function
	\[
		F_K(X_0,\dots,X_{e-1})
		:= \frac1{|G|}\sum_{\substack{\varphi\in\Hom(\Gamma_K, G)}}
			\prod_{i=0}^{e-1} X_i^{\deg(\jump_i(\varphi)-p\cdot\jump_{i+1}(\varphi))}
		\in \Z\llbracket X_0,\dots,X_{e-1}\rrbracket
	\]
	is rational.
\end{theorem}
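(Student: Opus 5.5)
Our plan is to use class field theory together with Poisson summation / duality, generalizing the approach of \cite{gundlach-abelian-extensions-by-artin-schreier}. Since $G$ is a finite abelian $p$-group, we reduce by duality to $G=\Z/p^e$ or, better, keep $G$ general and write $\Hom(\Gamma_K,G)=\Hom(C_K,G)$ with $C_K=\A_K^\times/K^\times$ the idele class group (profinite completion). The upper ramification filtration corresponds to the unit filtration $U_P^{(v)}$, so $\jump_i(\varphi)$ at $P$ is the least $v$ such that $\varphi(U_P^{(v)})\subseteq G[p^i]$, i.e.\ $p^i\varphi$ kills $U_P^{(v)}$. Thus $\prod_i X_i^{\deg(\jump_i-p\jump_{i+1})}$ is a product over places of a local weight $w_P(\varphi|_{K_P^\times})$ depending only on the restrictions of $p^i\varphi$ to the unit groups, and this weight is locally constant in the appropriate sense.

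The key steps are as follows. First, we apply Poisson summation on the compact group $\Hom(\A_K^\times/K^\times,G)$, viewed as the annihilator of $K^\times\otimes\hat G$ inside $\prod'_P\Hom(K_P^\times,G)$. This expresses $F_K$ as a sum over the dual lattice, namely global elements $\alpha\in K^\times\otimes G^\vee$ (or, via Artin--Schreier--Witt, elements of $W_e(K)\otimes$ something), of products of local Fourier transforms $\widehat{w_P}(\alpha)$. The factor $1/|G|$ and the count of everywhere unramified characters, which involves the class number and $\mathrm{Pic}^0$, are absorbed by the finite group $\Hom(\mathrm{Pic}(C_K),G)$. Second, we compute the local transforms. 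We expect that for the specific weights $X_i^{\jump_i-p\jump_{i+1}}$ the local Fourier transform is supported on a very simple set and equals a monomial. Via the Artin--Schreier--Witt description of local $G$-extensions and the filtration by pole orders of Witt vectors (Brylinski's conductor), the transform of $w_P$ at a dual element of ``pole order vector'' $(n_0,\dots,n_{e-1})$ should be, up to a factor of the form $(1-X_i^{\deg P})$ and constants, $\prod_i X_i^{n_i\deg P}$. This is precisely why the change of variables $\jump_i-p\jump_{i+1}$ was chosen: it makes the filtration jumps independent coordinates. Third, we substitute back. The global sum becomes a sum over Witt-type global data with prescribed pole divisors $D_0,\dots,D_{e-1}$, weighted by $\prod X_i^{\deg D_i}$ and by the number of global sections, which is a Riemann--Roch space dimension $q^{\ell(D_i)}$. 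By Riemann--Roch, $\ell(D)=\deg D+1-g$ once $\deg D>2g-2$, so the sum over divisors splits into finitely many low-degree terms plus geometric series in $qX_i$ times zeta-type sums $\sum_D X^{\deg D}$. The latter are rational because $Z_{C_K}$ is rational (Weil). Finitely many Euler factors $\prod_P(1-X_i^{\deg P})$ combine with these to give rational functions.

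The main obstacle is the second step: computing the local Fourier transform exactly and checking that it is a clean monomial, or a finite combination of them, in the dual pole orders for a general abelian $p$-group $G$. Here we would first treat $G=\Z/p^e$ using the Artin--Schreier--Witt pairing $W_e(K_P)/\wp\times K_P^\times\to\Z/p^e$ and the known fact that the filtration of $W_e(K_P)/\wp$ by reduced pole orders is dual to the unit filtration. Then we would extend to products of cyclic groups, noting that both the weights and duality behave multiplicatively up to the $G[p^i]$ bookkeeping. A secondary difficulty is handling the Riemann--Roch correction for small-degree divisors in the multivariate setting, but this contributes only a polynomial.
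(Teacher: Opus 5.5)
There is a genuine gap. Your proposal never deals with the local--global obstruction, and that obstruction is the whole difficulty once $K$ is not rational. You account for the kernel of $\Hom(\A_K^\times/K^\times,G)\to\bigoplus_P\Hom(\O_P^\times,G)$, namely the finitely many everywhere unramified characters. You do not account for its cokernel, which embeds into $\Ext^1_\Z(\Pic^0_K,G)$.

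Carried out correctly, your Poisson summation does not do what you expect. The dual group is the restricted product of the $K_P^\times\otimes\widehat G$; Artin--Schreier--Witt parametrizes the characters $\varphi$ themselves, not the dual side. Since $w_P$ is invariant under unramified characters, $\widehat{w_P}(\alpha)$ vanishes unless $\alpha$ is a unit at $P$ modulo $p^e$-th powers. So only a finite Selmer-type group of global $\alpha$ contributes, Pontryagin dual to $\Ext^1_\Z(\Pic^0_K,G)$; for $G=\Z/p^e\Z$ these are the $\alpha\in K^\times/K^{\times p^e}$ with $(\alpha)\in p^e\Div(C_K)$. Poisson summation therefore collapses exactly to the paper's finite character sum in \Cref{lem:character-sum}. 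There is no infinite sum over global data weighted by $q^{\ell(D)}$. The local transforms are also not monomials: at $\alpha=1$ the transform is the full local series $\prod_i H_{P,i}(X_i)$.

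What remains, and what your plan has no tool for, is proving that $\prod_P\widehat{w_P}(\alpha)$ is rational for each nontrivial $\alpha$. This needs control, uniform over all $P$, of where $\alpha/\pi_P^{v_P(\alpha)}$ sits relative to the subgroups $(U_P^{k_j+1})^{p^j}$. The paper gets this from the characteristic-$p$ identity $\sum_P(\omega_P([y])-1)\deg P=2g_K-2$, which is the degree of the divisor of $dy/y$. Together with \Cref{lem:filtration-comparison}, this gives $H_P^k=I^r$ for almost all $P$ (\Cref{cor:filtrations-usually-match}). Hence the local factors are $\prod_{i<r}H_{P,i}(X_i)\cdot(1-Q_P^{d_r}X_r)$ at almost all places.

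Your third step, summing over global Witt vectors with prescribed pole divisors via Riemann--Roch, is really a direct Artin--Schreier--Witt count of the characters (Lagemann's route). It meets the same obstruction in another form: a class with prescribed local jump bounds need not have a global representative with matching pole bounds. For $e\geq2$ this discrepancy does not disappear in large degree, so it is not a polynomial correction from finitely many low-degree divisors. For instance, take $k^{(P)}_1=k^{(P)}_2=\dots=0$ at every $P$. Then \Cref{lem:filtration-comparison}\ref{item:filtration-comparison-a} gives $H_P^{k^{(P)}}\subseteq I^1$ for every $P$. Moreover, $I^1$ is a proper subgroup of $\Ext^1_\Z(\Pic^0_K,G)$ whenever $G\neq G[p]$ and $\Pic^0_K[p]\neq\Pic^0_K[p^2]$. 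So the number of $\varphi$ with $\jump_{P,j}(\varphi)\le k^{(P)}_j$ exceeds the naive product count by a factor of at least $|\Ext^1_\Z(\Pic^0_K,G)/I^1|$, however large $\sum_P k^{(P)}_0\deg P$ is. This is exactly the error in Lagemann discussed in \Cref{rmk:lagemann-higher-exponents}. It is also why the terms $\prod_{i<r_\chi}H_i(X_i)\cdot Z_K(q^{d_{r_\chi}}X_{r_\chi})^{-1}B_\chi$ in \Cref{thm:main} are genuinely non-polynomial.

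Finally, the reduction to cyclic $G$ fails as stated. For $\varphi=(\varphi_1,\varphi_2)$ with values in $G_1\times G_2$ one has $\jump_{P,i}(\varphi)=\max(\jump_{P,i}(\varphi_1),\jump_{P,i}(\varphi_2))$, so the weight is not multiplicative. The paper instead counts $|\Hom(R_P^0/R_P^k,G)|$ directly for general $G$ (\Cref{lem:tau}).
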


The explicit description of the rational function $F_K(X_0,\dots,X_{e-1})$ in \Cref{thm:main} can be used (for given $K$ and $G$) to obtain exact formulas for the number of sub-$G$-extensions $\varphi$ with heights $\deg(\jump_i(\varphi)-p\cdot\jump_{i+1}(\varphi)) = n_i$. For $n_0,\dots,n_{e-1}\to\infty$, we give general asymptotics in \Cref{thm:Fasymp}.

Focusing on the divisor $\asc(\varphi) := \jump_0(\varphi)$, which was called the \emph{Artin--Schreier conductor} in \cite{gundlach-abelian-extensions-by-artin-schreier} and the \emph{last jump divisor} in \cite{gundlach-seguin-two-step-nilpotent,gundlach-seguin-local-global}, we obtain the following generalization of \cite[Corollary~4.5(b)]{gundlach-abelian-extensions-by-artin-schreier}:

\begin{theorem}[see \Cref{thm:main-asc}]\label{thm:intro-main-asc}
	For any nontrivial finite abelian $p$-group~$G$ and any global field~$K$ of characteristic~$p$, letting $a := 1 + \dim_{\F_p}(G[p])$:
	\begin{enumalpha}
	\item
		The single-variable generating function
		\[
			\Fasc_K(X)
			:= \frac1{|G|}\sum_{\substack{\varphi\in\Hom(\Gamma_K, G)}} X^{\deg(\asc(\varphi))}
			= F_K(X,X^p,\dots,X^{p^{e-1}})
			\in \Z\llbracket X\rrbracket
		\]
		is rational.
	\item
		Its only innermost pole is a simple pole at $X = q^{-a}$ with negative residue.
	\item
		We have the following asymptotic statement for some constant $C > 0$:
		\[
			|\{\varphi\in\Hom(\Gamma_K, G) : \deg(\asc(\varphi)) = n\}|
			\sim C q^{a n}
			\qquad\textnormal{ for }n\rightarrow\infty.
		\]
	\end{enumalpha}
\end{theorem}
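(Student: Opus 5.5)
Part~(a) is the easy step. By telescoping,
\[
	\asc(\varphi)=\jump_0(\varphi)=\sum_{i=0}^{e-1}p^i\bigl(\jump_i(\varphi)-p\cdot\jump_{i+1}(\varphi)\bigr),
\]
using $\jump_e(\varphi)=0$. Taking degrees gives $X^{\deg\asc(\varphi)}=\prod_{i}(X^{p^i})^{\deg(\jump_i(\varphi)-p\jump_{i+1}(\varphi))}$, so $\Fasc_K(X)=F_K(X,X^p,\dots,X^{p^{e-1}})$ as formal power series. This substitution is legitimate because each $X^{p^i}$ has zero constant term. Rationality then follows from \Cref{thm:main}, since substituting monomials into a rational function gives a rational function.

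For~(b) I will use the explicit shape of $F_K$ from \Cref{thm:main}. I expect it to be a finite combination of products of factors of the form $Z_K(q^{b}X_0^{c_0}\cdots X_{e-1}^{c_{e-1}})^{\pm1}$, coming from the local Artin--Schreier--Witt count combined with a Riemann--Roch count of global Witt-vector sections. Here $Z_K(T)=L_K(T)/((1-T)(1-qT))$, the zeta function of $C_K$, has its poles only at $T=1$ and $T=q^{-1}$.

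After substituting $X_i=X^{p^i}$, every pole of $\Fasc_K$ lies at some $X$ with $q^bX^{m}\in\{1,q^{-1}\}$ for suitable integers $b,m$. I will identify the factor responsible for the smallest modulus. Heuristically, the contribution from $G[p]$ at the bottom jump level gives about $q^{r\deg D}$ characters with $\asc$ bounded by a divisor $D$, where $r=\dim_{\F_p}G[p]$. Summing over the roughly $q^n$ effective divisors of degree~$n$ gives growth $q^{(r+1)n}=q^{an}$. This should appear as a factor $Z_K(q^{r}X)$ with exponent $m=1$, hence a simple pole at $X=q^{-a}$ with real poles only.

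I then have to check that every other factor only has poles with $|X|>q^{-a}$. Factors involving $X_i=X^{p^i}$ for $i\geq1$ put poles on circles of radius $q^{-b/p^i}$, and I need to verify that $b/p^i<a$ in every case; this is what rules out extra poles at $\zeta q^{-a}$ for roots of unity $\zeta$. Likewise the numerators $L_K$, whose zeros have absolute value $q^{-1/2}$, must neither cancel the pole nor create others. The sign of the residue then comes from $\operatorname{Res}_{X=q^{-a}}\frac{1}{1-q^aX}=-q^{-a}$ times a cofactor that is positive at $X=q^{-a}$. Positivity of the cofactor also follows from Pringsheim's theorem: the coefficients of $\Fasc_K$ are nonnegative, so the real point on the circle of convergence is a genuine pole, not cancelled, with a negative residue.

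Part~(c) follows from~(b) by the standard partial-fraction or transfer argument. The simple pole at $q^{-a}$ contributes $-\operatorname{Res}\cdot q^{a(n+1)}$ to the $n$-th coefficient, and all other poles have strictly larger modulus, so they contribute $O(q^{(a-\delta)n})$ for some $\delta>0$. Multiplying by $|G|$ gives $C>0$.

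The main obstacle is the bookkeeping in~(b): I must show from the explicit formula of \Cref{thm:main} that, after the substitution $X_i=X^{p^i}$, no other factor attains modulus $q^{-a}$. This amounts to an inequality between the exponents of $q$ attached to each jump level $i\geq1$ and $p^i a$, and I would attempt to derive it from the local count at each level.
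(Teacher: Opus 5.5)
Your parts (a) and (c) match the paper. The pole-location half of (b) is also the paper's argument once you use the actual formula of \Cref{thm:main}. Your ``bookkeeping'' reduces to $c_i>d_i$ and $pc_i\geq c_{i+1}$, which give $c_i\leq p^ic_0$ and hence $1+c_i<ap^i$ for $i\geq1$. The paper packages this as holomorphy of $\tilde F_K=F_K\prod_i(1-q^{1+c_i}X_i)$ near the polydisc $\{|X_i|\leq q^{-1-c_i}\}$, into which $X\mapsto(X,X^p,\dots,X^{p^{e-1}})$ maps the disc $|X|\leq q^{-a}$.

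The genuine gap is showing that the residue at $q^{-a}$ is nonzero. Pringsheim's theorem only says that the radius of convergence $R$ is a singularity; it does not say $R=q^{-a}$. If the contributions to the residue cancelled, $\Fasc_K$ would simply be holomorphic on a larger disc and Pringsheim would put its singularity elsewhere. So ``the real point on the circle of convergence is a genuine pole'' assumes what is to be proved. Nor can the cofactor be seen to be positive term by term. In \Cref{thm:main}, every nontrivial $\chi$ with $r_\chi\geq1$ also contains $H_0(X_0)$, so it contributes to the residue at $X=q^{-a}$, with a factor $B_\chi(q^{-ap^{r_\chi}},\dots,q^{-ap^{e-1}})$ of no evident sign. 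At the multivariate corner $X_i=q^{-1-c_i}$ these terms are killed by the factor $1-q^{1+c_{r_\chi}}X_{r_\chi}$. After the substitution $X_i=X^{p^i}$, however, all coordinates with $i\geq1$ lie strictly inside the polydisc, so that mechanism is lost.

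What you need is a lower bound on the coefficients. The paper gets it from \Cref{thm:Fasymp}, whose main term $C_0\cdots C_{e-1}\prod_iq^{(1+c_i)n_i}$, with all $C_i>0$, comes from the trivial character alone. Together with nonnegativity of the coefficients, this yields $\tilde F_K>0$ on the whole real box $0<X_i\leq q^{-1-c_i}$ (\Cref{cor:Fcont}), which contains $(q^{-a},q^{-ap},\dots,q^{-ap^{e-1}})$.

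A cheaper fix within your approach is to compare with $G[p]$. The inclusion $\Hom(\Gamma_K,G[p])\subseteq\Hom(\Gamma_K,G)$ preserves $\asc$, and $G[p]$ has the same $a$. For $G[p]$ one has $e=1$, so every nontrivial character has $r_\chi=0$ and contributes $B_\chi(X)/Z_K(X)$, which is holomorphic near $|X|\leq q^{-a}$. Hence the pole of $H_0$ survives, and the coefficients for $G[p]$, and therefore for $G$, are $\gg q^{an}$. This forces the radius of convergence of $\Fasc_K$ to be exactly $q^{-a}$. Once the pole is known to be genuine, your sign argument (the series is positive and increasing on $(0,q^{-a})$) correctly gives a negative residue and $C>0$.
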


\subsection*{Notation}

We denote the set of places of $K$ by $M_K$, its genus by $g_K$, its Picard group by $\Pic_K$, and the degree zero part of its Picard group by $\Pic^0_K$. For any place $P$, we let $K_P$ be the completion, $\O_P = \O_{K_P}$ its ring of integers, $v_P$ the $P$-adic valuation, $U_P^k = 1 + \pi_P^k\O_P$ the $k$-unit group (for $k\geq1$), $\pi_P$ any uniformizer, and $\kappa_P$ the residue field, and $Q_P$ the size of the residue field. For any continuous group homomorphism $\varphi:\Gamma_K\to G$ or $\varphi:\Gamma_{K_P}\to G$ and any $i\geq0$, we let
\[
	\jump_{P,i}(\varphi)
	:= \inf\{v\in\R_{\geq0} : \varphi(\Gamma_{K_P}^v) \subseteq G[p^i]\}
	= \min\{k\in\Z_{\geq0} : \varphi(\Gamma_{K_P}^{k+1}) \subseteq G[p^i]\}
	\in \Z_{\geq0}.
\]
(The equality and integrality here follow from the Hasse--Arf theorem, which states that in an abelian field extension, the jumps in the higher ramification filtration in upper numbering can only occur right \emph{after} integers.)
Note that if $\varphi$ is unramified at $P$, then $\jump_{P,i}(\varphi)=0$.

\subsection*{Strategy of proof}

Following for example \cite{wright-counting-abelian-extensions,lagemann-artin-schreier-witt,gundlach-abelian-extensions-by-artin-schreier}, we use class field theory to describe sub-$G$-extensions of $K$. For rational function fields, it provides a $|G|$-to-$1$ map from $\Hom(\Gamma_K, G)$ to $\bigoplus_{P\in M_K} \Hom(\O_P^\times, G)$. For non-rational function fields, this local-global principle can fail. The classical method to deal with this failure involves choosing a finite set $S$ of places generating the Picard group. We use a more direct approach, constructing an $n$-to-$1$ map from $\Hom(\Gamma_K, G)$ to the kernel of a boundary map $\delta : \bigoplus_{P\in M_K}\Hom(\O_P^\times,G)\to\Ext^1_\Z(\Pic^0_K,G)$, with $n = |G|\cdot|{\Ext^1_\Z(\Pic^0_K,G)}|$.

To count elements $\varphi$ of $\Hom(\Gamma_K, G)$ with prescribed values for $\jump_{P,i}(\varphi)$, we rely on a description of the filtration $\O_P^\times \supset U_P^1 \supset U_P^2 \supset \dots$, together with information about how this filtration interacts with the boundary map $\delta$. We obtain this information by looking at $p$-Selmer groups, making use of crucial ideas of Lagemann \cite{lagemann-artin-schreier}, but carefully working around a small mistake in \cite{lagemann-artin-schreier-witt} for groups of exponent larger than~$p$ (see \Cref{rmk:lagemann-higher-exponents}).

\subsection*{Acknowledgements}

This work was supported by the Deutsche Forschungsgemeinschaft (DFG, German Research Foundation) --- Project-ID 491392403 --- TRR 358 (project A4).

The author is grateful to Jordan Ellenberg and Kiran Kedlaya for asking whether \cite{gundlach-abelian-extensions-by-artin-schreier} generalizes to non-rational function fields, to Jürgen Klüners, Nicolas Potthast, and Bé\-ran\-ger Seguin for helpful discussions and comments on an earlier draft, and to Xujia Chen for instructing GPT to provide comments on the article.

\section{The Selmer group}\label{sn:selmer}

We first briefly recall some basic facts about the local unit groups $\O_P^\times$. The exact sequence
\[
	1 \rightarrow U_P^1 \rightarrow \O_P^\times \rightarrow \kappa_P^\times \rightarrow 1,
\]
splits by Hensel's lemma, so we obtain a natural isomorphism $\O_P^\times \simeq \kappa_P^\times \times U_P^1$.

\begin{lemma}[{\cite[Proposition~5.2b]{potthast-elementary-abelian}}]\label{rmk:units-mod-p-fund-dom}
	Every coset in $\O_P^\times/\O_P^{\times p}$ has exactly one representative of the form $1 + \sum_{i\geq1:\ p\nmid i} a_i\pi_P^i$ with $a_i\in\kappa_P$.
\end{lemma}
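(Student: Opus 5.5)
The plan is to work directly in $K_P \simeq \kappa_P((\pi_P))$. Since $K_P$ is a local field of characteristic~$p$, the residue field $\kappa_P$ embeds canonically into $\O_P$, so expressions like $1+\sum a_i\pi_P^i$ with $a_i\in\kappa_P$ make sense. The first step is the reduction $\O_P^\times/\O_P^{\times p} = U_P^1/(U_P^1)^p$. This follows from $\O_P^\times \simeq \kappa_P^\times\times U_P^1$ together with the fact that $\kappa_P^\times$ is $p$-divisible: it has order $Q_P-1$, which is prime to~$p$. I then prove existence and uniqueness of the representatives separately, using only two facts. The first is that Frobenius $x\mapsto x^p$ is additive in characteristic~$p$. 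The second is that $\kappa_P$ is perfect.

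\emph{Existence} is by successive approximation. Given $u\in U_P^1$, I construct inductively $r_n = 1+\sum_{1\le i\le n,\ p\nmid i} a_i\pi_P^i$ and $w_n\in U_P^1$ with $u\equiv r_n w_n^p \pmod{\pi_P^{n+1}}$. For the step from $n-1$ to $n$, write $u/(r_{n-1}w_{n-1}^p) \equiv 1+c\pi_P^n \pmod{\pi_P^{n+1}}$ with $c\in\kappa_P$, and distinguish two cases.
\begin{itemize}
\item If $p\nmid n$, put $a_n := c$ and $w_n := w_{n-1}$. Since $r_{n-1}\equiv 1 \pmod{\pi_P}$, we get $r_n \equiv r_{n-1}(1+c\pi_P^n) \pmod{\pi_P^{n+1}}$.
\item If $n=pk$, choose $b:=c^{1/p}\in\kappa_P$ (perfectness) and put $w_n := w_{n-1}(1+b\pi_P^{k})$. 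Then $(1+b\pi_P^k)^p = 1+c\pi_P^{n}$ exactly.
\end{itemize}
The point to check is convergence. The correction factors to $w_n$ lie in $U_P^{\lceil n/p\rceil}$, so $(w_n)$ is Cauchy in the complete group $U_P^1$. The partial sums $r_n$ also converge. Passing to the limit gives $u = r\,w^p$ with $r$ of the required form. This bookkeeping is the only mildly delicate step, and it is routine.

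\emph{Uniqueness.} Suppose $r\ne r'$ are two such series with $r/r' = w^p$ for some $w\in\O_P^\times$. Let $m$ be the smallest index with $a_m\ne a'_m$; necessarily $p\nmid m$. Then $r/r' \equiv 1+(a_m-a'_m)\pi_P^m \pmod{\pi_P^{m+1}}$. Now look at $w^p$. Reducing modulo $\pi_P$ gives $w_0^p=1$ in $\kappa_P$, hence $w_0=1$ and $w\in U_P^1$. Write $w = 1+\sum_{j\ge k} b_j\pi_P^j$ with $b_k\ne0$. Additivity of Frobenius gives $w^p = 1+\sum_{j\ge k} b_j^p\pi_P^{pj}$, so the first nonzero term of $w^p-1$ has valuation $pk$, which is divisible by~$p$. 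This contradicts the valuation $m$ of $r/r'-1$, since $p\nmid m$.
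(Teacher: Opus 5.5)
Your proof is correct and follows the paper's route. You reduce to $U_P^1$ because $|\kappa_P^\times|$ is prime to $p$, then divide successively by $p$-th powers $(1+c^{1/p}\pi_P^k)^p$ to remove the terms whose exponent is divisible by $p$. Beyond the paper's sketch, you also write out the convergence bookkeeping and the uniqueness argument (the valuation of $w^p-1$ is divisible by $p$), both of which the paper leaves to the cited reference.
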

\begin{proof}
	 First, using that the order of $\kappa_P^\times$ is coprime to $p$, we can divide by a $p$-th power to make the constant coefficient~$1$. Then, divide by $p$-th powers to eliminate all $p$-power terms. (See for example \cite[Proposition~5.2b]{potthast-elementary-abelian} for more details.)
\end{proof}

To understand obstructions to the local-global principle for sub-$G$-extensions of $K$, we will use the \emph{$p$-Selmer group}
\[
	\Sel_K := \{ x \in K^\times : (x) = p D\textnormal{ for some divisor }D \} / K^{\times p},
\]
which is a (multiplicative) $\F_p$-vector space isomorphic to the $p$-torsion subgroup of $\Pic_K$ via the isomorphism
\[
	\Sel_K \stackrel\sim\to \Pic_K[p],\qquad
	[x] \mapsto [D]\quad \textnormal{if }(x)=pD.
\]
(The injectivity of this map relies on the fact that every element of the unit group $\F_q^\times$ is a $p$-th power.)

For any element $[y]$ of the $p$-Selmer group and any place $P$, we have $y \in K_P^{\times p}\cdot U_P^1$ since $y/\pi_P^{v_P(y)} \in \O_P^\times \simeq \kappa_P^\times\times U_P^1$ with $p \mid v_P(y)$ and since every element of $\kappa_P^\times$ is a $p$-th power. We define
\[
	\omega_P([y]) := \sup\{k\geq1 \mid y \in K_P^{\times p} \cdot U_P^{k}\} \in \Z_{\geq1}\cup\{\infty\}.
\]
\begin{remark}\label{rmk:omega-alt}
	If we write $[y] = [1+\sum_{i\geq1:\ p\nmid i} a_i\pi_P^i]$ using \Cref{rmk:units-mod-p-fund-dom}, then
	\[
		\omega_P([y]) = \inf\{i\geq1: p\nmid i,\ a_i\neq0\}.
	\]
\end{remark}

\begin{lemma}\label{lem:selmer}
	For every nontrivial element $[y]$ of the $p$-Selmer group, we have
	\[
		\sum_{P\in M_K} (\omega_P([y]) - 1) \cdot \deg(P) = 2g_K - 2,
	\]
	where $g_K$ is the genus of $K$. In particular, $\omega_P([y]) = 1$ for all but finitely many $P$.
\end{lemma}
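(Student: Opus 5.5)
Since $[y]$ is nontrivial, $y\notin K^{\times p}$, so $dy$ is a nonzero meromorphic differential on $C_K$: in characteristic $p$, $dy=0$ exactly when $y\in K^p$, and $\F_q$ is perfect. The degree of its divisor is $2g_K-2$, so it suffices to show, for every place $P$ and a suitable representative, that
\[
	v_P(dy) = \omega_P([y]) - 1,
\]
where $v_P(dy)$ is the valuation of the differential at $P$ with respect to a uniformizer.

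The first step is to observe that $v_P(dy)$ is not changed by multiplying $y$ by a $p$-th power. Indeed, $d(yz^p)=z^p\,dy$, so the divisor of $dy$ shifts by $p(z)$, and globally $(d(yz^p))=(dy)+p(z)$ has the same degree. Locally at $P$ we may also multiply by elements of $K_P^{\times p}$. The reason is that $v_P(dy)$ can be computed in $K_P$ as the valuation of $\frac{dy}{d\pi_P}\,d\pi_P$, and this local quantity changes by $p\,v_P(z)$. However, it is cleaner to compare $\frac{dy}{y}$ instead of $dy$. The logarithmic differential $dy/y$ is invariant under $y\mapsto yz^p$, so the global divisor of $dy/y$ is intrinsic to $[y]$. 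It has degree $2g_K-2$, and it can be computed locally at each $P$ after replacing $y$ by any element of $yK_P^{\times p}$.

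The local computation then goes as follows. Since $v_P(y)$ is divisible by $p$ and residue field elements are $p$-th powers, by \Cref{rmk:units-mod-p-fund-dom} we may take $y=1+\sum_{p\nmid i} a_i\pi_P^i$ in $K_P$. Let $m=\omega_P([y])$, which by \Cref{rmk:omega-alt} is the least $i$ with $p\nmid i$ and $a_i\ne0$. It is finite: if all $a_i$ vanished, $y$ would be a $p$-th power locally, hence $dy/y=0$ identically, contradicting $dy\neq0$. Differentiating with respect to $\pi_P$ gives
\[
	\frac{dy}{d\pi_P}=\sum_{p\nmid i} i a_i \pi_P^{i-1},
\]
whose leading term is $m a_m\pi_P^{m-1}$ with $m a_m\neq 0$, since $p\nmid m$. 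Because $y$ is a unit, $v_P(dy/y)=m-1$. (Here the differentiation is of the power series expansion in $\kappa_P\llbracket\pi_P\rrbracket$; the residue field constants have derivative zero because $\kappa_P$ is perfect.) Summing over $P$ with weights $\deg P$ gives the stated identity.

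The finiteness claim follows immediately: the left-hand side is a finite degree, and every term $(\omega_P([y])-1)\deg P$ is nonnegative.

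The main technical point is justifying that the global differential's local valuation is computed by the formal derivative in the completion. This is the standard fact that $\Omega_{K}\otimes K_P$ is generated by $d\pi_P$, with $v_P$ computed via the expansion in $\kappa_P\llbracket\pi_P\rrbracket$. I would cite this rather than reprove it.
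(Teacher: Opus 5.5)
Your proof is correct and follows essentially the same route as the paper. Both arguments use that the logarithmic differential $dy/y$ is a nonzero differential, of degree $2g_K-2$, depending only on $[y]$, and both compute $v_P(dy/y)=\omega_P([y])-1$ locally from the normalized representative $1+\sum_{p\nmid i}a_i\pi_P^i$. The only cosmetic difference is that you justify $dy/y\neq 0$ via ``$dy=0$ if and only if $y\in K^p$'', whereas the paper cites injectivity of the logarithmic derivative on $K^\times/K^{\times p}$.
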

\begin{proof}
	This is essentially \cite[Lemma~4.4]{lagemann-artin-schreier} and \cite[Proposition~6.14]{potthast-elementary-abelian}.
	Let $\Omega_K \subseteq \Omega_{K_P}$ be the groups of differentials on $K$ and $K_P$, respectively.
	The logarithmic derivatives
	\[
		l:K^\times/K^{\times p} \hookrightarrow \Omega_K
		\textnormal{ and }
		l_P:K_P^\times/K_P^{\times p} \hookrightarrow \Omega_{K_P},
		\qquad
		x\mapsto \frac{dx}{x}
	\]
	are injective group homomorphisms. (See for example \cite[Theorem~26.5]{matsumura-commutative-ring-theory} for injectivity.)
	
	As $y\notin K^{\times p}$, we have $l(y)\neq0$. The canonical divisor $(l(y))$ has degree $2g_K-2$, so we have $\sum_P v_P(l_P(y)) \cdot \deg(P) = 2g_K - 2$. It therefore suffices to show $\omega_P([y])-1 = v_P(l_P(y))$. Indeed, writing $[y]=[1+\sum_{i\geq1:\ p\nmid i}a_i\pi_P^i]$, \Cref{rmk:omega-alt} implies
	$\omega_P([y]) = \inf\{i\geq1: p\nmid i,\ a_i\neq0\}$, while on the other hand we have
	\[
		v_P(l_P(y))
		= v_P\Bigl(\sum_{\substack{i\geq1:\\p\nmid i}}ia_i\pi_P^{i-1} d\pi_P\Bigr)
		= \inf\{i\geq1:\ p\nmid i,\ a_i\neq0\} - 1.
	\qedhere
	\]
\end{proof}

\begin{corollary}\label{cor:selmer-embedding}
	\begin{enumalpha}
	\item\label{item:selmer-embedding-a}
		For every $P\in M_K$, there is a number $w_P\geq1$ such that the group homomorphism
		\[
			\Sel_K \rightarrow \O_P^\times / (\O_P^{\times p}\cdot U_P^{w_P+1}),\qquad
			[y] \mapsto [y/\pi_P^{v_P(y)}],
		\]
		which does not depend on the choice of uniformizer $\pi_P$, is injective.
	\item
		For all but finitely many $P$, we can take $w_P = 1$.
	\item
		If $g_K\leq1$, then we can take $w_P = 1$ for all $P$.
	\end{enumalpha}
\end{corollary}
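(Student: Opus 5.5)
Three facts do the work: \Cref{lem:selmer}, which bounds $\omega_P([y])$ for every nontrivial Selmer class; \Cref{rmk:omega-alt}, which identifies the kernel of the local map; and the finiteness of $\Sel_K\simeq\Pic_K[p]$.

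\emph{Well-definedness.} For $[y]\in\Sel_K$ we have $p\mid v_P(y)$. Changing the uniformizer $\pi_P$ to $u\pi_P$ with $u\in\O_P^\times$ multiplies $y/\pi_P^{v_P(y)}$ by $u^{-v_P(y)}$, which lies in $\O_P^{\times p}$. Replacing $y$ by $yz^p$ with $z\in K^\times$ multiplies $y/\pi_P^{v_P(y)}$ by $(z/\pi_P^{v_P(z)})^p\in\O_P^{\times p}$. So the map is well defined, independent of $\pi_P$, and a homomorphism.

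\emph{Kernel.} Write $u=y/\pi_P^{v_P(y)}$. Then $[y]$ is in the kernel exactly when $u\in\O_P^{\times p}\cdot U_P^{w_P+1}$. Since $y\in K_P^{\times p}u$ and $K_P^{\times p}\cap\O_P^\times=\O_P^{\times p}$, this is equivalent to $y\in K_P^{\times p}\cdot U_P^{w_P+1}$, i.e.\ to $\omega_P([y])\geq w_P+1$. (One can also read this off the normal form of \Cref{rmk:units-mod-p-fund-dom}: all coefficients $a_i$ with $i\le w_P$ vanish.) Hence the map is injective if and only if $\omega_P([y])\leq w_P$ for every nontrivial $[y]\in\Sel_K$.

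\emph{Part (a).} For nontrivial $[y]$, \Cref{lem:selmer} gives $\omega_P([y])<\infty$: all summands are nonnegative and sum to the finite number $2g_K-2$. More precisely, $(\omega_P([y])-1)\deg P\le 2g_K-2$. Since $\Sel_K$ is finite, we can take
\[
	w_P=\max\bigl(\{1\}\cup\{\omega_P([y]) : [y]\in\Sel_K\setminus\{1\}\}\bigr).
\]
In fact $w_P=\max\bigl(1,\,1+\lfloor(2g_K-2)/\deg P\rfloor\bigr)$ works uniformly.

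\emph{Part (b).} Each nontrivial $[y]$ has $\omega_P([y])=1$ for all but finitely many $P$, by \Cref{lem:selmer}. There are only finitely many $[y]$, so outside a finite set of places every nontrivial class has $\omega_P=1$, and $w_P=1$ works there.

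\emph{Part (c).} If $g_K\le1$, then $2g_K-2\le0$. A nontrivial Selmer class would give a sum of nonnegative terms equal to a nonpositive number. For $g_K=0$ the sum would be $-2$, which is impossible, so $\Sel_K$ is trivial. For $g_K=1$ the sum is $0$, forcing $\omega_P([y])=1$ for all $P$. In both cases $w_P=1$ works.

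The main obstacle is the kernel step: making the translation between $u\in\O_P^{\times p}U_P^{k}$ and $y\in K_P^{\times p}U_P^{k}$ precise. The rest follows immediately from \Cref{lem:selmer} and finiteness.
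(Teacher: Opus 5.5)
Your proof is correct and follows the paper's argument: take $w_P$ to be the maximum of $\omega_P([y])$ over the finitely many nontrivial Selmer classes (or $1$ if there are none), with \Cref{lem:selmer} giving finiteness of $\omega_P$, the all-but-finitely-many statement, and the genus $\leq 1$ case. You also spell out steps the paper leaves implicit, namely independence of $\pi_P$ and the kernel identification via $K_P^{\times p}\cap\O_P^\times=\O_P^{\times p}$; your explicit choice $w_P=\max(1,1+\lfloor(2g_K-2)/\deg P\rfloor)$ is a valid extra.
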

\begin{proof}
	The claim follows from the lemma by taking $w_P$ to be the maximum value of $\omega_P([y])$ over the (finitely many) nontrivial elements $[y]$ of the Selmer group. (If the Selmer group is trivial, we can take $w_P=1$.)
\end{proof}

\begin{remark}
	\Cref{lem:selmer} and \Cref{cor:selmer-embedding} are special to characteristic $p$: If the characteristic of $K$ were different from $p$, then there would be infinitely many $P\in M_K$ for which the ($p$-Selmer) group homomorphism $\Sel_K \to \O_P^\times/\O_P^{\times p}$, $[y]\mapsto[y/\pi_P^{v_p(y)}]$, is trivial. Indeed, letting $[y_1],\dots,[y_t]$ be the elements of $\Sel_K$, the Chebotarev density theorem would imply that infinitely many places $P\in M_K$ split completely in the \emph{separable} field extension $K(\sqrt[p]{y_1},\dots,\sqrt[p]{y_t})$, which would mean that $y_i \in K_P^{\times p}$ and thus $y_i/\pi_P^{v_P(y_i)} \in \O_P^{\times p}$ for all $i$.
\end{remark}

\section{Class field theory}\label{sn:class-field-theory}

By class field theory, $\Gal(K^\ab | K)$ is isomorphic to the profinite completion of the idele class group $\A_K^\times / K^\times$, so elements of $\Hom(\Gamma_K, G) = \Hom(\Gal(K^\ab|K),G)$ are in bijection with continuous group homomorphisms (in the following just called \emph{maps}) $\varphi : \A_K^\times / K^\times \to G$. We henceforth identify the elements of $\Hom(\Gamma_K,G)$ with the maps $\A_K^\times/K^\times\to G$. Since the $k$-th higher ramification group for a place $P$ in $\Gal(K^\ab|K)$ corresponds to the image of $U_P^k$ in $\A_K^\times/K^\times$, we can compute $\jump_{P,i}(\varphi)$ as follows:
\begin{equation}\label{eq:lastjump-vs-kunits}
	\jump_{P,i}(\varphi)
	= \min\{k\in\Z_{\geq0} \mid \varphi(U_P^{k+1}) \subseteq G[p^i]\}.
\end{equation}

To understand the idele class group $\A_K^\times/K^\times$, we use the exact sequence
\[
	1
	\to \Big(\prod_{P\in M_K}\O_P^\times\Big)/\F_q^\times
	\to \A_K^\times/K^\times
	\to \Pic_K
	\to 1,
\]
in which the map $\A_K^\times/K^\times\to\Pic_K$ is given by $(x_P)_{P\in M_K} \mapsto \sum_{P\in M_K} v_P(x_P) \cdot P$.

Since $G$ is a finite $p$-group and $\F_q^\times$ has size coprime to $p$, we have a natural isomorphism $\Hom((\prod_P \O_P^\times)/\F_q^\times, G) \simeq \Hom(\prod_P \O_P^\times, G) \simeq \bigoplus_P \Hom(\O_P^\times, G)$.
We obtain the long exact sequence
\[
	1
	\to \Hom(\Pic_K, G)
	\to \Hom(\A_K^\times/K^\times, G)
	\to \bigoplus_{P\in M_K}\Hom(\O_P^\times, G)
	\stackrel\delta\to \Ext^1_\Z(\Pic_K, G).
\]
Since we have a (non-canonical) isomorphism $\Pic_K \simeq \Z \times \Pic^0_K$ and we have $\Hom(\Z,G)\simeq G$ and $\Ext^1_\Z(\Z,G) = 1$, this can be written as
\begin{equation}\label{eq:ext-exact-sequence}
	1
	\to G \times \Hom(\Pic^0_K, G)
	\to \Hom(\A_K^\times/K^\times, G)
	\to \bigoplus_{P\in M_K}\Hom(\O_P^\times, G)
	\stackrel\delta\to \Ext^1_\Z(\Pic^0_K, G).
\end{equation}

The group $\Ext^1_\Z(\Pic_K^0, G)$ and the boundary map $\delta$ can be described concretely as follows:
\begin{remark}\label{rmk:ext-made-concrete}
	Let $\Pic^0_K[p^\infty] \simeq \prod_{i=1}^t \Z/p^{n_i}\Z$, with a generator of the $i$-th factor given by the divisor class $[D_i]$ of order $p^{n_i}$. Then, $\Ext^1_\Z(\Pic^0_K, G) \simeq \prod_{i=1}^t G/p^{n_i} G$. In particular, $\Ext^1_\Z(\Pic^0_K,G)$ depends only on the $p$-part of the Picard group $\Pic_K$.
	
	If we choose $y_i\in K^\times$ so that $(y_i) = p^{n_i} D_i$, then the restriction of $\delta$ to $\Hom(\O_P^\times, G)$ corresponds to the map
	\[
		\delta : \Hom(\O_P^\times, G) \to \prod_{i=1}^t G/p^{n_i}G,
		\qquad
		f \mapsto \left(f\big(y_i / \pi_P^{v_P(y_i)}\big)\right)_{i=1,\dots,t}.
	\]
	(This map is independent of the choice of uniformizer $\pi_P$ as $p^{n_i} \mid v_P(y_i)$.)
	One can of course directly verify the exact sequence (\ref{eq:ext-exact-sequence}) using this explicit description of $\Ext^1_\Z(\Pic^0_K, G)$ and the map $\delta$.
\end{remark}

\begin{remark}\label{lem:linear-independence}
	Let $y_1,\dots,y_t$ be as in the previous remark. Their images $[y_1],\dots,[y_t]$ in $\Sel_K$ are $\F_p$-linearly independent since the corresponding divisor classes $[p^{n_1-1}D_1],\dots,[p^{n_t-1}D_t]$ in $\Pic_K^0[p]$ are $\F_p$-linearly independent.
\end{remark}

\section{The local unit groups}

\subsection{Constraints on the exponent jumps}

We first show the following well-known constraints on the values $\jump_{P,i}(\varphi)$:

\begin{lemma}\label{lem:jump-restrictions}
	Every $\varphi\in\Hom(\Gamma_{K_P},G)$ satisfies:
	\begin{enumalpha}
	\item
		$\jump_{P,i}(\varphi)=0$ for all $i\geq e$.
	\item
		$\jump_{P,i}(\varphi) \geq p\jump_{P,i+1}(\varphi)$ for all $i\geq0$.
	\end{enumalpha}
\end{lemma}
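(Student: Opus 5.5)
Part (a) is immediate: $G$ has exponent $p^e$, so $G[p^i]=G$ for every $i\geq e$. Hence $\varphi(\Gamma_{K_P}^{1})\subseteq G[p^i]$ already holds for $k=0$, and the $\min$ in the definition of $\jump_{P,i}(\varphi)$ equals $0$.

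For (b) I will use local class field theory. As in \eqref{eq:lastjump-vs-kunits}, this lets me view $\varphi$ as a map $K_P^\times\to G$ with $\jump_{P,i}(\varphi)=\min\{k\geq0 : \varphi(U_P^{k+1})\subseteq G[p^i]\}$. Set $m:=\jump_{P,i+1}(\varphi)$. If $m=0$ there is nothing to prove. Otherwise, by minimality, $\varphi(U_P^{m})\not\subseteq G[p^{i+1}]$, so I can pick $u\in U_P^m$ with $p^{i+1}\varphi(u)\neq0$.

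The key input is the $p$-th power map on principal units in characteristic $p$: $(1+x)^p=1+x^p$, so $u^p\in U_P^{pm}$. (Only this inclusion is needed, not surjectivity.) Since $\varphi$ is a homomorphism,
\[
	p^i\varphi(u^p)=p^{i+1}\varphi(u)\neq0,
\]
which means $\varphi(u^p)\notin G[p^i]$. Therefore $\varphi(U_P^{pm})\not\subseteq G[p^i]$. Because the filtration $U_P^{k+1}$ decreases in $k$, every $k$ with $\varphi(U_P^{k+1})\subseteq G[p^i]$ must satisfy $k+1>pm$, that is, $k\geq pm$. This gives $\jump_{P,i}(\varphi)\geq p\,m = p\,\jump_{P,i+1}(\varphi)$.

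There is no serious obstacle here. The only points requiring care are the off-by-one between $U_P^{k}$ and $U_P^{k+1}$ in \eqref{eq:lastjump-vs-kunits}, and checking that the local identification of $\Gamma_{K_P}^v$ with $U_P^{\lceil v\rceil}$ (the Hasse--Arf setup recalled in the notation section) is exactly what the definition of $\jump_{P,i}$ uses.
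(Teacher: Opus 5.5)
Your proposal is correct and follows the paper's proof essentially verbatim. Part (a) uses $G[p^i]=G$ for $i\geq e$. Part (b) takes $u\in U_P^{m}$ with $\varphi(u)\notin G[p^{i+1}]$ and uses $(U_P^m)^p\subseteq U_P^{pm}$ in characteristic $p$ to get $\varphi(U_P^{pm})\not\subseteq G[p^i]$, and you handle the off-by-one indexing correctly.
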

\begin{proof}
	Recall the description of $\jump_{P,i}(\varphi)$ in \Cref{eq:lastjump-vs-kunits}.
	\begin{enumalpha}
	\item
		This is clear since $\varphi(U_P^{k+1}) \subseteq G = G[p^i]$ for all $k\geq0$.
	\item
		Let $k := \jump_{P,i+1}(\varphi)$. The case $k=0$ being trivial, we assume $k\geq1$. By \Cref{eq:lastjump-vs-kunits}, $\varphi(U_P^k) \nsubseteq G[p^{i+1}]$, so $\varphi(U_P^k)^p \nsubseteq G[p^i]$. Since we are in characteristic $p$, we have $(U_P^k)^p \subseteq U_P^{pk}$ as $(1+\pi^k x)^p = 1 + \pi^{pk} x^p$, so this implies $\varphi(U_P^{pk}) \nsubseteq G[p^i]$. Thus, $\jump_{P,i}(\varphi) \geq pk = p\jump_{P,i+1}(\varphi)$.
	\qedhere
	\end{enumalpha}
\end{proof}

This motivates the definition
\[
	\J_e := \{(k_0,k_1,\dots) : k_i\in\Z_{\geq0}\textnormal{ and }k_i \geq p k_{i+1}\textnormal{ for all }i\geq0\textnormal{ and }k_i = 0\textnormal{ for all }i\geq e\},
\]
so that by \Cref{lem:jump-restrictions}, we have
\[
	(\jump_{P,0}(\varphi),\jump_{P,1}(\varphi),\dots) \in \J_e
	\qquad\textnormal{for all}\qquad\varphi\in\Hom(\Gamma_{K_P},G).
\]
Note that any $k\in\J_e$ satisfies $k_0\geq k_1\geq\dots$ and that all but finitely many $k_i$ must be zero.

\subsection{Higher unit groups}

We next work out a description of the pro-$p$-group $U_P^1$ and its higher-dimensional filtration by products of higher unit groups
\[
	\dots \hookrightarrow U_P^3 \hookrightarrow U_P^2 \hookrightarrow U_P^1,
\]
using \cite[Lemma~4.1]{gundlach-abelian-extensions-by-artin-schreier}. This will later help us count maps $\varphi:\O_P^\times\to G$ with given values for $\jump_{P,i}(\varphi)$.

\begin{definition}
	For any $k\in\J_e$ and any $i\geq1$, let $\lambda_i(k)$ be the smallest integer $\lambda\geq0$ with $i>k_\lambda$, so that
	\[
		k_0 \geq \cdots \geq k_{\lambda-1} \geq i > k_\lambda \geq k_{\lambda+1} \geq \dots.
	\]
	For any $k\in\J_e$ and any place $P$, writing the size of the residue field at $P$ as  $Q_P = q^{\deg(P)} = p^d$, we define the (additive) group
	\[
		R_P^k
		:= \prod_{\substack{i\geq1:\\p\nmid i}} p^{\lambda_i(k)} \Z_p^d
		\subseteq \prod_{\substack{i\geq1:\\p\nmid i}} \Z_p^d.
	\]
\end{definition}

\begin{remark}\label{rmk:properties-of-R}
	We have:
	\begin{enumalpha}
	\item
		$R_P^0 = \prod_{i\geq1:\ p\nmid i}\Z_p^d$ for $k=0=(0,\dots)$.
	\item
		$R_P^k\supseteq R_P^l$ if $k_j\leq l_j$ for all $j\geq0$.
	\end{enumalpha}
\end{remark}

\begin{lemma}\label{lem:image-of-R}
	There is an isomorphism of topological groups $\alpha : R_P^0 \stackrel\sim\to U_P^1$ which for each $k\in\J_e$ sends the subgroup $R_P^k$ of $R_P^0$ to the subgroup of $U_P^1$ jointly generated by the subgroups $(U_P^{k_j+1})^{p^j}$ for $j=0,1,\dots$:
	\[\begin{tikzcd}
		R_P^0 \rar{\sim}[swap]{\alpha} & U_P^1 \\
		R_P^k \uar[hook] \rar{\sim}[swap]{\alpha} & U_P^{k_0+1} \cdot \Big(U_P^{k_1+1}\Big)^p \cdot \Big(U_P^{k_2+1}\Big)^{p^2} \cdots \uar[hook]
	\end{tikzcd}\]
\end{lemma}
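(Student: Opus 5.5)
We plan to use the classical Artin--Hasse type description of $U_P^1$ in characteristic $p$ (this is the content of \cite[Lemma~4.1]{gundlach-abelian-extensions-by-artin-schreier}). Fix a uniformizer $\pi=\pi_P$, identify $\kappa_P\simeq\F_{p^d}$, and choose a $\Z_p$-basis $\omega_1,\dots,\omega_d$ of $W(\kappa_P)$ lifting an $\F_p$-basis of $\kappa_P$. For $p\nmid i$ and $a\in\kappa_P$, the elements $1-a\pi^i$ generate a $\Z_p$-module structure on $U_P^1$, since $U_P^1$ is an abelian pro-$p$ group. We define
\[
	\alpha\bigl((c_i)_{p\nmid i}\bigr) := \prod_{\substack{i\geq1:\\ p\nmid i}}\ \prod_{m=1}^d (1 - \bar\omega_m\pi^i)^{c_{i,m}},
\]
where $c_i=(c_{i,1},\dots,c_{i,d})\in\Z_p^d$ and $\bar\omega_m$ is the reduction of $\omega_m$. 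The product converges because the $i$-th factor lies in $U_P^i$. Clearly $\alpha$ is a continuous homomorphism.

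\textbf{Step 1: $\alpha$ is an isomorphism.} For $p\nmid i$ and $j\geq0$, the factor $(1-a\pi^i)^{p^j}=1-a^{p^j}\pi^{ip^j}$ has leading term in degree $ip^j$. Every integer $n\geq1$ is uniquely of the form $n=ip^j$ with $p\nmid i$, and $a\mapsto a^{p^j}$ is bijective on $\kappa_P$. Hence the graded pieces $U_P^n/U_P^{n+1}\simeq\kappa_P$ are hit bijectively by the images of $p^j\Z_p^d/p^{j+1}\Z_p^d$ in the $i$-th coordinate. A standard successive approximation argument then gives surjectivity: we solve the leading term degree by degree and use completeness. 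Injectivity follows by looking at the smallest $n=ip^j$ where a nonzero tuple has a coordinate with minimal $p$-adic valuation. Here one has to check that distinct $(i,j)$ with the same $n$ cannot cancel, but they are distinct because $n$ determines $(i,j)$. Since both sides are compact Hausdorff, $\alpha$ is a homeomorphism.

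\textbf{Step 2: the image of $R_P^k$.} We compute the closed subgroup $H_j:=\alpha^{-1}\bigl((U_P^{k_j+1})^{p^j}\bigr)$. By the same filtration argument, $U_P^{m}$ corresponds under $\alpha$ to the set of tuples with $c_i\in p^{s}\Z_p^d$, where $s$ is minimal with $ip^s\geq m$. Since $\alpha$ is a $\Z_p$-module map, $(U_P^{k_j+1})^{p^j}$ corresponds to the tuples with $c_i\in p^{j+s_i}\Z_p^d$, where $s_i$ is minimal with $ip^{s_i}>k_j$. Summing over $j$, the $i$-th coordinate of $\alpha^{-1}$ of the product is $p^{\mu_i}\Z_p^d$ with
\[
	\mu_i=\min_j\bigl(j+s_{i,j}\bigr).
\]
It remains to show $\mu_i=\lambda_i(k)$. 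Taking $j=\lambda_i(k)$ gives $i>k_j$, so $s_{i,j}=0$ and $\mu_i\leq\lambda_i(k)$. Conversely, for $j<\lambda:=\lambda_i(k)$ we have $i\leq k_{\lambda-1}$. The condition $k_j\geq p^{\lambda-1-j}k_{\lambda-1}$ from the definition of $\J_e$ then gives $k_j\geq p^{\lambda-1-j}i$, which forces $ip^{s}>k_j$ to need $s\geq\lambda-j$. Hence $j+s_{i,j}\geq\lambda$.

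The main obstacle is the bookkeeping in Step 1, namely making the successive approximation and injectivity rigorous with infinitely many coordinates. Step 2 then reduces to the clean inequality above, which uses exactly the defining condition $k_j\geq pk_{j+1}$ of $\J_e$.
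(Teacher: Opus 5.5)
Your proposal is correct and follows the paper's proof: the paper cites \cite[Lemma~4.1]{gundlach-abelian-extensions-by-artin-schreier} for an isomorphism $\alpha$ carrying $\prod_{p\nmid i}p^{\nu((k+1)/i)}\Z_p^d$ onto $U_P^{k+1}$, where $\nu(x)=\min\{s\geq0: p^s\geq x\}$, so your exponent $s_{i,j}$ is exactly $\nu((k_j+1)/i)$; your Step~1 reconstructs that cited isomorphism from the explicit products $\prod(1-\bar\omega_m\pi^i)^{c_{i,m}}$ and a graded-pieces argument instead of citing it. Step~2 is the paper's verification that $\min_j(\nu((k_j+1)/i)+j)=\lambda_i(k)$ by the same two inequalities; for the lower bound you omit the case $j\geq\lambda_i(k)$, which is trivial since $j+s_{i,j}\geq j$.
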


\begin{remark}
	If $k_r=k_{r+1}=\dots=0$, then the image of $R_P^k$ is already generated by the finitely many subgroups $(U_P^{k_j+1})^{p^j}$ for $0\leq j<r$ together with the subgroup $(U_P^1)^{p^r}$.
\end{remark}

\begin{proof}
	For any positive real number $x$, let
	\[
		\nu(x) := \min\{k\in\Z_{\geq0} : p^k \geq x\}.
	\]
	For any integer $k\geq0$, let
	\[
		\Lambda_P^{k+1}
		:= \prod_{\substack{i\geq1:\\p\nmid i}} p^{\nu((k+1)/i)}\Z_p^d
		\subseteq \prod_{\substack{i\geq1\\p\nmid i}}\Z_p^d.
	\]
	By \cite[Lemma~4.1]{gundlach-abelian-extensions-by-artin-schreier}, there is an isomorphism $\alpha$ from $\Lambda_P^1 = \prod_{i\geq1:\ p\nmid i} \Z_p^d = R_P^0$ to $U_P^1$ which for each $k\geq0$ sends the subgroup $\Lambda_P^{k+1}$ to the subgroup $U_P^{k+1}$.

	It thus suffices to show that for each $k\in\J_e$, the group $R_P^k$ is the subgroup of $\Lambda_P^1 = R_P^0$ jointly generated by the subgroups $p^j\Lambda_P^{k_j+1}$ for $j=0,1,\dots$. By definition,
	\[
		\Lambda_P^{k_0+1} + p\Lambda_P^{k_1+1} + p^2\Lambda_P^{k_2+1} + \dots
		= \prod_{\substack{i\geq1:\\p\nmid i}}
			p^{\min\{\nu((k_j+1)/i) + j : j\geq0\}} \Z_p^d
		\qquad\textnormal{and}\qquad
		R_P^k
		= \prod_{\substack{i\geq1:\\p\nmid i}}
			p^{\lambda_i(k)} \Z_p^d,
	\]
	so we need to show that for each $i\geq1$ with $p\nmid i$, if we let $\lambda := \lambda_i(k)$, we have
	\begin{equation}\label{eq:nu-eq-lambda}
		\min\{\nu((k_j+1)/i) + j : j\geq0\} = \lambda.
	\end{equation}
	By definition, we have $k_{\lambda-1}\geq i>k_\lambda$.

	We first prove the inequality ``$\geq$'' in \Cref{eq:nu-eq-lambda}. For $j\geq\lambda$, we use $\nu((k_j+1)/i) + j \geq j \geq \lambda$.
	For $0\leq j\leq\lambda-1$, since $k\in\J_e$, we have $k_j \geq p k_{j+1} \geq \dots \geq p^{\lambda-1-j} k_{\lambda-1} \geq p^{\lambda-1-j} i$, so $(k_j+1)/i > p^{\lambda-1-j}$, which implies $\nu((k_j+1)/i) > \lambda-1-j$ and thus $\nu((k_j+1)/i) + j \geq \lambda$.

	To prove the inequality ``$\leq$'', we take $j = \lambda$. Since $k_\lambda<i$, we have $(k_\lambda+1)/i\leq1$, so $\nu((k_\lambda+1)/i) = 0$ and thus $\nu((k_\lambda+1)/i)+\lambda = \lambda$.
\end{proof}

\subsection{Counting local extensions}

Class field theory shows that the sub-$G$-extensions of the local field $K_P$ are in bijection with elements of $\Hom(K^\times_P, G)$. As $K_P^\times \simeq \Z\times\O_P^\times$, we thus have a $|G|$-to-$1$ relation between sub-$G$-extensions of $K_P$ and elements of $\Hom(\O_P^\times, G)$. In this section, our goal is to count those extensions satisfying $\jump_{P,j}(\varphi)\leq k_j$ for all $j\geq0$.

Recall that $\O_P^\times \simeq \kappa_P^\times\times U_P^1 \simeq \kappa_P^\times\times R_P^0$, with the second isomorphism arising from $\alpha$. As $G$ is a $p$-group and the size of $\kappa_P^\times$ is coprime to $p$, we can thus identify $\Hom(\O_P^\times, G)$ with $\Hom(R_P^0, G)$.
For any $k\in\J_e$, we will identify the group $\Hom(R_P^0 / R_P^k, G)$ in the natural way with the group of homomorphisms in $\Hom(R_P^0,G) \simeq \Hom(\O_P^\times, G)$ that vanish on $R_P^k$. \Cref{lem:image-of-R} implies:

\begin{lemma}\label{lem:jump-from-R}
	Let $\varphi\in\Hom(R_P^0,G) = \Hom(\O_P^\times,G)$ and $k\in\J_e$. We have $\jump_{P,j}(\varphi) \leq k_j$ for all $j\geq0$ if and only if $\varphi$ lies in $\Hom(R_P^0/R_P^k, G)$.
\end{lemma}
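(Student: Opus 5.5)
The plan is to reduce the condition on $\jump_{P,j}$ to a vanishing condition on a product of unit subgroups, and then transport that subgroup through $\alpha$ using \Cref{lem:image-of-R}. By \Cref{eq:lastjump-vs-kunits}, and since the relevant minimum is over a condition monotone in $k$ (the subgroups $U_P^{k+1}$ decrease as $k$ grows), we have $\jump_{P,j}(\varphi)\leq k_j$ if and only if $\varphi(U_P^{k_j+1})\subseteq G[p^j]$. Here $\varphi$ is viewed on $\O_P^\times$, and only its restriction to $U_P^1$ matters, because the $\kappa_P^\times$-part maps trivially to the $p$-group $G$.

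The key step is to rewrite each condition as a vanishing statement. Since $\varphi$ is a homomorphism, $\varphi(U_P^{k_j+1})\subseteq G[p^j]$ holds if and only if $p^j\varphi(x)=0$ for all $x\in U_P^{k_j+1}$. This is equivalent to $\varphi(x^{p^j})=0$ for all such $x$, that is, to $\varphi$ vanishing on $(U_P^{k_j+1})^{p^j}$. So the conjunction over all $j\geq0$ says exactly that $\varphi$ vanishes on each subgroup $(U_P^{k_j+1})^{p^j}$. Since the kernel of $\varphi$ is a subgroup, this is the same as $\varphi$ vanishing on the subgroup they jointly generate, namely $U_P^{k_0+1}\cdot(U_P^{k_1+1})^p\cdots$.

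Next I transport this through $\alpha$. Under the identification $\Hom(\O_P^\times,G)=\Hom(R_P^0,G)$ via $\alpha$, \Cref{lem:image-of-R} states that this generated subgroup is precisely $\alpha(R_P^k)$. Therefore the conditions hold if and only if $\varphi\circ\alpha$ vanishes on $R_P^k$, i.e.\ $\varphi\in\Hom(R_P^0/R_P^k,G)$.

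The only point needing care is whether "jointly generated" in \Cref{lem:image-of-R} means the abstract subgroup or its closure. This is harmless: $\varphi$ is continuous with finite (discrete) target, so its kernel is closed, and vanishing on the abstract subgroup is equivalent to vanishing on its closure. Beyond that I expect no real obstacle; the substance was already done in \Cref{lem:image-of-R}.
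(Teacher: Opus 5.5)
Your proposal is correct and follows the paper's own proof: you use \Cref{eq:lastjump-vs-kunits} to rewrite each condition as $(U_P^{k_j+1})^{p^j}\subseteq\ker\varphi$, combine these into the jointly generated subgroup, and identify that subgroup with $\alpha(R_P^k)$ via \Cref{lem:image-of-R}. Your remarks on monotonicity in $k$ and on closures (the kernel of a continuous map to a finite group is closed) are harmless refinements that the paper leaves implicit.
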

\begin{proof}
	By \Cref{eq:lastjump-vs-kunits}, we have $\jump_{P,j}(\varphi)\leq k_j$ if and only if $\varphi(U_P^{k_j+1})\subseteq G[p^j]$. This is equivalent to $(U_P^{k_j+1})^{p^j} \subseteq \ker\varphi$. Now, this holds for all $j\geq0$ if and only if the subgroup generated by all $(U_P^{k_j+1})^{p^j}$ is contained in $\ker\varphi$. For the corresponding map $\varphi:R_P^0\to G$, by \Cref{lem:image-of-R}, this means that $R_P^k \subseteq \ker\varphi$.
\end{proof}

Our next goal is to compute the number of homomorphisms $R_P^0/R_P^k \to G$. The formula will involve numbers $c_i,d_i$ defined as follows:

\begin{definition}\label{def:cd}
	To the group $G=\prod_{j=1}^e C_{p^j}^{m_j}$, we associate the following integers $c_i,d_i$, for $i=0,\dots,e$:
	\begin{align*}
		c_i &:= \sum_{j=1}^i m_j (p^i-p^{i-j}) + \sum_{j=i+1}^e m_j p^i, \\
		d_i &:= \sum_{j=1}^i m_j (p^i-p^{i-j}) + \sum_{j=i+1}^e m_j (p^i-1).
	\end{align*}
\end{definition}

\begin{example}
	If $G = C_{p^e}$, then
	\[
		c_i =
		\begin{cases}
			p^i &\textnormal{if }i < e,\\
			p^i - 1 &\textnormal{if }i = e,
		\end{cases}
		\qquad\textnormal{and}\qquad
		d_i = p^i - 1.
	\]
\end{example}

We record some inequalities that will be useful later, when studying the poles of the generating function $F_K(X_0,\dots,X_{e-1})$:

\begin{remark}\label{rmk:cd}
	We have:
	\begin{enumalpha}
	\item\label{rmk:cd:0}
		$c_0 = \sum_{j=1}^e m_j = \dim_{\F_p}(G[p])$ and $d_0 = 0$.
	\item\label{rmk:cd:diff}
		$c_i - d_i = p c_i - d_{i+1} = \sum_{j=i+1}^e m_j > 0$ for $i=0,\dots,e-1$ and $c_e-d_e = 0$.
	\item\label{rmk:cd:diff2}
		$p c_i - c_{i+1} = m_{i+1} \geq 0$ for $i=0,\dots,e-1$.
	\end{enumalpha}
\end{remark}

Now, we are ready to count homomorphisms $R_P^0/R_P^k \to G$:

\begin{lemma}\label{lem:tau}
	For any $k\in\J_e$, if we write $k_i - pk_{i+1} = v_i + p w_i$ with $v_i\in\{0,\dots,p-1\}$ and $w_i\in\Z_{\geq0}$ for $0\leq i\leq e-1$, then
	\[
		|{\Hom(R_P^0/R_P^k, G)}|
		= Q_P^{\sum_{i=0}^{e-1}(c_iv_i+d_{i+1}w_i)},
	\]
	where $Q_P$ is the size of the residue field at $P$.
\end{lemma}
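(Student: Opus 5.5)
The plan is to compute the group $R_P^0/R_P^k$ explicitly, count homomorphisms factor by factor, and then rewrite the resulting exponent in terms of the $v_i,w_i$. By definition,
\[
	R_P^0/R_P^k \simeq \prod_{\substack{i\geq1:\\p\nmid i}} (\Z/p^{\lambda_i(k)}\Z)^d .
\]
This is a finite product, since $\lambda_i(k)=0$ once $i>k_0$. For $G=\prod_{j=1}^e C_{p^j}^{m_j}$ we have $|\Hom(\Z/p^\lambda\Z,G)| = |G[p^\lambda]| = p^{g(\lambda)}$, where $g(\lambda):=\sum_j m_j\min(j,\lambda)$. Since $Q_P=p^d$, this gives
\[
	|\Hom(R_P^0/R_P^k,G)| = Q_P^{E}, \qquad E:=\sum_{i\geq1,\ p\nmid i} g(\lambda_i(k)).
\]

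Next, I would reorganize $E$ by a layer-cake (summation by parts) argument. Write $g(\lambda)=\sum_{l=1}^{\lambda}h_l$ with $h_l:=g(l)-g(l-1)=\sum_{j\geq l}m_j$. Then
\[
	E=\sum_{l=1}^{e} h_l\cdot\#\{i\geq1 : p\nmid i,\ \lambda_i(k)\geq l\}.
\]
By the definition of $\lambda_i(k)$, the condition $\lambda_i(k)\geq l$ is equivalent to $i\leq k_{l-1}$. Hence the count is $N(k_{l-1})$, where $N(n):=n-\lfloor n/p\rfloor$. We get
\[
	E=\sum_{l=0}^{e-1}h_{l+1}\bigl(k_l-\lfloor k_l/p\rfloor\bigr).
\]

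Now I substitute $k_l=\sum_{i\geq l}p^{i-l}(v_i+pw_i)$, which follows by unwinding $k_i-pk_{i+1}=v_i+pw_i$ together with $k_i=0$ for $i\geq e$. Since $0\leq v_l<p$, we have
\[
	\lfloor k_l/p\rfloor=w_l+\sum_{i>l}p^{i-l-1}(v_i+pw_i),
\]
and therefore
\[
	k_l-\lfloor k_l/p\rfloor=v_l+(p-1)w_l+\sum_{i>l}(p^{i-l}-p^{i-l-1})(v_i+pw_i).
\]
The coefficient of $v_i$ in $E$ is then $h_{i+1}+\sum_{l<i}h_{l+1}(p^{i-l}-p^{i-l-1})$. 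The coefficient of $w_i$ is $(p-1)h_{i+1}+\sum_{l<i}h_{l+1}(p^{i-l+1}-p^{i-l})$.

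The remaining step, which I expect to be the only fiddly part, is to match these coefficients with $c_i$ and $d_{i+1}$. I would do this by collecting the contribution of each $m_j$, using that $m_j$ occurs in $h_{l+1}$ exactly when $l\leq j-1$.

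For the $v_i$-coefficient:
\begin{itemize}
\item If $j>i$, the contribution is $1+\sum_{l=0}^{i-1}(p^{i-l}-p^{i-l-1})=p^i$.
\item If $j\leq i$, the contribution is the telescoping sum $\sum_{l=0}^{j-1}(p^{i-l}-p^{i-l-1})=p^i-p^{i-j}$.
\end{itemize}
This is exactly $c_i$.

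For the $w_i$-coefficient:
\begin{itemize}
\item If $j\geq i+1$, the contribution is $(p-1)+\sum_{l=0}^{i-1}(p^{i+1-l}-p^{i-l})=p^{i+1}-1$.
\item If $j\leq i$, the contribution is $p^{i+1}-p^{i+1-j}$.
\end{itemize}
This matches $d_{i+1}$ as given in \Cref{def:cd}.
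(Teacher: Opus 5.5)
Your proof is correct and follows the paper's argument essentially step by step. You use the same factor-by-factor count via $|G[p^{\lambda_i(k)}]|$, the same exchange of summation based on $\lambda_i(k)>u \iff i\le k_u$ (your $h_{l+1}=\sum_{j\ge l+1}m_j$ is just the paper's $\sum_j m_j\sum_{u=0}^{j-1}$ with the order swapped), and the same substitution for $\lfloor k_u/p\rfloor$ followed by collecting the coefficients of each $m_j$.
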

\begin{proof}
	As before, let $Q_P = p^d$.
	By definition,
	\[
		R_P^0 / R_P^k
		= \prod_{\substack{i\geq1:\\p\nmid i}} \left(\Z/p^{\lambda_i(k)}\Z\right)^d,
	\]
	so
	\[
			|{\Hom(R_P^0/R_P^k, G)}|
		= \prod_{\substack{i\geq1:\\p\nmid i}} \big|G\big[p^{\lambda_i(k)}\big]\big|^d.
	\]
	Since $|G[p^r]| = p^{\sum_{j=1}^e m_j\cdot\min(j,r)}$, we obtain
	\[
		|{\Hom(R_P^0/R_P^k, G)}|
		= p^{\tau(k)d} = Q_P^{\tau(k)}
	\]
	with
	\begin{align*}
		\tau(k)
		&:= \sum_{\substack{i\geq1:\\p\nmid i}}
			\sum_{j=1}^e
			m_j \cdot \min(j,\lambda_i(k))
		= \sum_{\substack{i\geq1:\\p\nmid i}}
			\sum_{j=1}^e
			\sum_{u=0}^{\min(j-1,\lambda_i(k)-1)}
			m_j
		= \sum_{j=1}^e m_j
			\sum_{u=0}^{j-1}
			\sum_{\substack{i\geq1:\\p\nmid i\\\lambda_i(k)>u}} 1.
	\end{align*}
	As $\lambda_i(k)>u$ is equivalent to $i\leq k_u$, this means that
	\begin{equation*}
		\tau(k)
		= \sum_{j=1}^e m_j
			\sum_{u=0}^{j-1}
				\bigg(k_u - \left\lfloor\frac{k_u}{p}\right\rfloor\bigg).
	\end{equation*}
	It remains to express this number in terms of the variables $v_i$ and $w_i$.

	For any $k\in\J_e$, we have $k_e=k_{e+1}=\dots=0$ and we have written $k_i - pk_{i+1} = v_i + p w_i$ with $v_i\in\{0,\dots,p-1\}$ and $w_i\in\Z_{\geq0}$ for $0\leq i\leq e-1$. We then have
	\[
		k_u
		= \sum_{i=u}^{e-1} (v_i p^{i-u} + w_i p^{i+1-u})
		= v_u
			+ \sum_{i=u+1}^{e-1} \underbrace{v_i p^{i-u}}_{\in p\Z}
			+ \sum_{i=u}^{e-1} \underbrace{w_i p^{i+1-u}}_{\in p\Z}
		\qquad\textnormal{for any }0\leq u\leq e-1,
	\]
	so
	\[
		\left\lfloor\frac{k_u}{p}\right\rfloor
		= \sum_{i=u+1}^{e-1} v_i p^{i-u-1}
			+ \sum_{i=u}^{e-1} w_i p^{i-u}.
	\]
	Plugging this into our previous formula for $\tau(k)$, we obtain
	\begin{align*}
		\tau(k)
		&= \sum_{j=1}^e m_j \sum_{u=0}^{j-1} \bigg(
			v_u
			+ \sum_{i=u+1}^{e-1} v_i (p^{i-u}-p^{i-u-1})
			+ \sum_{i=u}^{e-1} w_i (p^{i+1-u}-p^{i-u})
		\bigg).
	\end{align*}
	Lastly, we rearrange this expression to write it as a linear combination of the variables $v_i,w_i$:
	\begin{align*}
		\tau(k)
		&= \sum_{j=1}^e m_j \bigg(
			\sum_{i=0}^{j-1} v_i
			+ \sum_{i=0}^{e-1} v_i \sum_{u=0}^{\min(j-1,i-1)} (p^{i-u} - p^{i-u-1})
			+ \sum_{i=0}^{e-1} w_i \sum_{u=0}^{\min(j-1,i)} (p^{i+1-u}-p^{i-u})
		\bigg) \\
		&= \sum_{j=1}^e m_j \bigg(
			\sum_{i=0}^{j-1} v_i
			+ \sum_{i=0}^{e-1} v_i (p^i - p^{i-\min(j,i)})
			+ \sum_{i=0}^{e-1} w_i (p^{i+1} - p^{i+1-\min(j,i+1)})
		\bigg) \\
		&= \sum_{i=0}^{e-1} v_i \bigg(
			\sum_{j=i+1}^e m_j
			+ \sum_{j=1}^e m_j (p^i - p^{i-\min(j,i)})
		\bigg)
		+ \sum_{i=0}^{e-1} w_i \bigg(
			\sum_{j=1}^e m_j (p^{i+1} - p^{i+1-\min(j,i+1)})
		\bigg),
	\end{align*}
	which by the definitions of $c_i$ and $d_i$ can be written as
	\begin{equation*}
		\tau(k) = \sum_{i=0}^{e-1}(c_i v_i + d_{i+1} w_i).
	\qedhere
	\end{equation*}
\end{proof}

\section{Ext filtrations}

In this section, we use what we have learned about $p$-Selmer groups in \Cref{sn:selmer} to gain information about the image under the boundary map $\delta$ of the group $\Hom(R_P^0/R_P^k, G)$ of maps $\varphi$ satisfying $\jump_{P,j}(\varphi)\leq k_j$ for all $j$.

\begin{definition}~
	\begin{enumalpha}
	\item
		For any $P\in M_K$ and $k\in\J_e$, we let $H_P^k\subseteq\Ext^1_\Z(\Pic^0_K, G)$ be the image under the boundary map $\delta$ defined in \Cref{sn:class-field-theory} of the subgroup $\Hom(R_P^0 / R_P^k, G)$ of $\Hom(R_P^0, G) \simeq \Hom(\O_P^\times, G)$.
	\item
		For any $r\geq0$, we let $I^r \subseteq \Ext^1_\Z(\Pic^0_K, G)$ be the image of $\Ext^1_\Z(\Pic^0_K, G[p^r])$ in $\Ext^1_\Z(\Pic^0_K, G)$.
	\end{enumalpha}
\end{definition}

\begin{remark}
	As a consequence of \Cref{rmk:properties-of-R}, we have:
	\begin{enumalpha}
	\item
		$H_P^0 = 1$ for $0=(0,\dots)$.
	\item
		$H_P^k \subseteq H_P^l$ if $k_j\leq l_j$ for all $j\geq0$.
	\end{enumalpha}
\end{remark}

\begin{remark}
	The subgroups $I^r$ satisfy
	\[
		1 = I^0 \subseteq \dots \subseteq I^e = I^{e+1} = \dots = \Ext^1_\Z(\Pic^0_K, G).
	\]
\end{remark}

We will now compare the two filtrations of $\Ext^1_\Z(\Pic_K^0,G)$ by the subgroups $H_P^k$ and $I^r$.

\begin{lemma}\label{lem:filtration-comparison}
	Let $P\in M_K$ and $r\geq0$.
	\begin{enumalpha}
	\item\label{item:filtration-comparison-a}
		If $k,l\in\J_e$ satisfy $k_j=l_j$ for all $j\geq r$, then $H_P^k$ and $H_P^l$ have the same image in the quotient $\Ext^1_\Z(\Pic^0_K,G)/I^r$.
	\item\label{item:filtration-comparison-b}
		Let $w_P$ be as in \Cref{cor:selmer-embedding}\ref{item:selmer-embedding-a}.
		If $k\in\J_e$ satisfies $k_r\geq w_P$, then $I^{r+1} \subseteq H_P^k$.
		In particular, if $k_{e-1} \geq w_P$, then $H_P^k = \Ext^1_\Z(\Pic^0_K, G)$.
	\end{enumalpha}
\end{lemma}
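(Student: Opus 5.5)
The plan is to work with the explicit description of $\delta$ from \Cref{rmk:ext-made-concrete}. There, $\Ext^1_\Z(\Pic^0_K,G)\simeq\prod_{i=1}^t G/p^{n_i}G$, and a homomorphism $f\in\Hom(\O_P^\times,G)$ is sent to $(f(u_i))_i$, where $u_i := y_i/\pi_P^{v_P(y_i)}$. Under this identification $I^r$ is the image of $\prod_i G[p^r]$, i.e.\ $I^r=\prod_i (G[p^r]+p^{n_i}G)/p^{n_i}G$. I would also use that $\Hom(R_P^0/R_P^k,G)$ consists of those $f$ with $f((U_P^{k_j+1})^{p^j})=1$ for all $j$ (\Cref{lem:jump-from-R}), and equivalently those $f$ with $f(U_P^{k_j+1})\subseteq G[p^j]$ for all $j$.

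For \ref{item:filtration-comparison-a} it suffices by symmetry to show that $H_P^k\subseteq H_P^l+I^r$. Set $m:=\max(k,l)$ componentwise. This lies in $\J_e$, because the defining inequalities of $\J_e$ are preserved under taking componentwise maxima. Since $H_P^k\subseteq H_P^m$, it is enough to treat the case $k\leq l$, with $k_j=l_j$ for $j\geq r$. Let $f$ vanish on $R_P^l$. Consider the composition $\bar f:\O_P^\times\to G\to G/G[p^r]\simeq p^rG$, which is induced by multiplication by $p^r$, so $\bar f = p^r f$. For $j<r$ we have $p^r f((U_P^{l_j+1})^{p^j})=1$ trivially, and for $j\geq r$ we have $(U_P^{k_j+1})^{p^j}=(U_P^{l_j+1})^{p^j}$. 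Hence $p^rf$ kills $R_P^k$.

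The obstacle is that this only tells us something about the image of $f$ modulo $G[p^r]$. What we need is to lift: find $f'$ vanishing on $R_P^k$ with $f-f'$ taking values in $G[p^r]$. Then $\delta(f)-\delta(f')\in I^r$, since $\delta$ is natural in $G$.

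To produce such a lift, I would use the structure $R_P^0/R_P^k\simeq\prod(\Z/p^{\lambda_i(k)})^d$ and compare it with $\lambda_i(l)$. The hypothesis $k_j=l_j$ for $j\geq r$ gives that $\lambda_i(k)<\lambda_i(l)$ forces $\lambda_i(k)\geq r$ in the relevant coordinates. More precisely, if $i>k_{r}=l_r$ then $\lambda_i(k)=\lambda_i(l)\leq r$. Otherwise both values are $\geq r$, and the coordinate-wise condition on $f$ becomes: $f(e_i)\in G[p^{\lambda_i(l)}]$ and we need $f'(e_i)\in G[p^{\lambda_i(k)}]$ with $f(e_i)-f'(e_i)\in G[p^r]$. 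Since $\lambda_i(k)\geq r$, the choice $f'(e_i)=0$ works whenever $f(e_i)\in G[p^r]$, but in general $f(e_i)$ need not be $r$-torsion. I expect this coordinate-wise lifting to be the fiddly step. If it fails as stated, I would instead argue directly with the values $f(u_i)$: write $u_i$ via $\alpha^{-1}$ in coordinates and show that only the coordinates $i'$ with $\lambda_{i'}(k)<r$ can contribute nontrivially modulo $G[p^r]$ after accounting for $p^{n_i}G$.

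For \ref{item:filtration-comparison-b}, take a target element of $I^{r+1}$, given by $(g_i)$ with $g_i\in G[p^{r+1}]$. By \Cref{cor:selmer-embedding}\ref{item:selmer-embedding-a} and \Cref{lem:linear-independence}, the classes of $u_1,\dots,u_t$ are linearly independent in the $\F_p$-vector space $V:=\O_P^\times/(\O_P^{\times p}U_P^{w_P+1})$. By \Cref{lem:image-of-R}, $V$ corresponds under $\alpha$ to coordinates $i\leq w_P$ modulo $p$. I would build $f$ on $R_P^0$ by sending the coordinate directions $i\leq w_P$ (with $p\nmid i$) into $G[p^{r+1}]$ and all other coordinates to $0$. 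Such an $f$ vanishes on $R_P^k$ because $\lambda_i(k)\geq r+1$ for $i\leq w_P\leq k_r$.

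It remains to achieve $f(u_i)=g_i$. I would do this inductively on $r$, using the filtration $G[p^{r+1}]\supseteq G[p^r]$. Linear independence mod $p$ lets us choose $f$ with prescribed values in $G[p^{r+1}]/G[p^r]$ on the $u_i$, by extending a dual basis of $V$. The error then lies in $I^r$, which is contained in $H_P^{k'}$ for $k'\leq k$ by induction ($k_{r-1}\geq pk_r\geq w_P$). The final claim follows by taking $r=e-1$, since $I^e$ is the whole group.
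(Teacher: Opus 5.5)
\textbf{Part (a) has a genuine gap.} The coordinatewise lift you defer is the whole content of the statement, and the comparison of exponents you set up for it is backwards.

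Your claim that $i>k_r=l_r$ implies $\lambda_i(k)=\lambda_i(l)$ is false. Take $r=1$, $k=(0,0,\dots)$, $l=(1,0,\dots)$ and $i=1$: then $\lambda_1(k)=0\neq 1=\lambda_1(l)$. The same example refutes ``$\lambda_i(k)<\lambda_i(l)$ forces $\lambda_i(k)\geq r$''.

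The missing observation is the reverse one. If $\lambda:=\lambda_i(k)>r$, the defining inequalities $k_{\lambda-1}\geq i>k_\lambda$ only involve indices $\geq r$. Hence they also hold for $l$, and $\lambda_i(l)=\lambda_i(k)$. By symmetry, $\lambda_i(k)\neq\lambda_i(l)$ forces both to be $\leq r$.

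So, for $k\le l$, the coordinates split as follows:
\begin{itemize}
\item For $i\le k_r$ you have $\lambda_i(k)=\lambda_i(l)\geq r+1$. These are the coordinates you worried about, and they pose no problem: keep $f$ there.
\item For $i>k_r$ the exponents can differ. But there $f$ already takes values in $G[p^{\lambda_i(l)}]\subseteq G[p^r]$, so you may replace it by $0$.
\end{itemize}
The resulting $f'$ kills $R_P^k$ and $f-f'$ is $G[p^r]$-valued, which gives $\delta(f)\in H_P^k+I^r$.

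This is the paper's argument. The paper does it symmetrically, moving both maps by $G[p^r]$-valued maps to maps factoring through the common factors with exponent $>r$. Your reduction via the componentwise maximum is valid but unnecessary.

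Separately, your check that $p^rf$ kills $R_P^k$ tests $(U_P^{l_j+1})^{p^j}$ instead of $(U_P^{k_j+1})^{p^j}$ for $j<r$, so it does not prove what it claims. The conclusion is true, but only via the exponent comparison above, and it is not needed anyway.

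\textbf{Part (b) is correct} and takes a slightly different route from the paper. The paper completes the independent classes of the $u_j$ to an $\F_p$-basis of $\prod_{i\le w_P,\ p\nmid i}(\Z/p\Z)^d$. It then lifts this to a $\Z/p^{r+1}\Z$-basis of $\prod_{i\le w_P,\ p\nmid i}(\Z/p^{r+1}\Z)^d$, hitting any $(g_j)\in G[p^{r+1}]^t$ exactly in one step.

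You instead prescribe $f(u_j)$ only modulo $G[p^r]$, which needs nothing beyond linear algebra over $\F_p$. You then absorb the error into $I^r\subseteq H_P^k$ by induction on $r$, using $k_{r-1}\geq pk_r\geq w_P$ and the base case $I^0=1$. This works.
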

\begin{proof}
	\begin{enumalpha}
	\item
		By definition, we have the natural isomorphisms
		\[
			R_P^0 / R_P^k \simeq \prod_{\substack{i\geq1:\\p\nmid i}} \Z^d/p^{\lambda_i(k)} \Z^d
			\qquad\textnormal{and}\qquad
			R_P^0 / R_P^l \simeq \prod_{\substack{i\geq1:\\p\nmid i}} \Z^d/p^{\lambda_i(l)} \Z^d.
		\]
		We now compare the factor groups.
		Since $k_j=l_j$ for all $j\geq r$, we have $\lambda_i(l) = \lambda_i(k)$ whenever one of the numbers $\lambda_i(l), \lambda_i(k)$ is strictly larger than $r$.
		This means that $R_P^0/R_P^k$ and $R_P^0/R_P^l$ agree except for some $p^r$-torsion factors.
		
		We can add to any $f:R_P^0/R_P^k\to G$ a homomorphism $R_P^0/R_P^k\to G[p^r]$ so that the resulting function factors through $\prod_{i\geq1:\ p\nmid i,\ \lambda_i(k)>r}\Z^d/p^{\lambda_i(k)}\Z^d$.
		Similarly, we can add to any $f:R_P^0/R_P^l\to G$ a homomorphism $R_P^0/R_P^l\to G[p^r]$ so that the resulting function factors through $\prod_{i\geq1:\ p\nmid i,\ \lambda_i(l)>r}\Z^d/p^{\lambda_i(l)}\Z^d$.
		
		Hence, $\Hom(R_P^0/R_P^k,G)$ and $\Hom(R_P^0/R_P^l,G)$ have the same image under $\delta$ in the quotient group $\Ext^1_\Z(\Pic_K^0,G)/\Ext^1_\Z(\Pic_K^0,G[p^r])$, as claimed.
	\item
		Note that for all $1\leq i\leq w_P$, we have $\lambda_i(w_P,0,\dots) = 1$ and $\lambda_i(k)\geq r+1$. For $i>w_P$, we have $\lambda_i(w_P,0,\dots) = 0$. We obtain the following commutative diagram, with the top right arrow being the projection onto the coordinates $i\leq w_P$:
		\[\begin{tikzcd}[column sep=1.5em]
			\O_P^\times/\alpha(R_P^k) \rar \dar[->>] & U_P^1/\alpha(R_P^k) \rar[->>]{\alpha^{-1}} \dar[->>] &
				R_P^0/R_P^k \dar[->>] \rar[->>] &
				\prod_{1\leq i\leq w_P:\ p\nmid i} \Z^d/p^{r+1}\Z^d \dar[->>]\\
			\O_P^\times / (\O_P^{\times p}\cdot U_P^{w_P+1}) \rar{\sim} & U_P^1 / ((U_P^1)^p \cdot U_P^{w_P+1}) \rar{\sim}[swap]{\alpha^{-1}} &
				R_P^0/R_P^{(w_P,0,\dots)} \rar{\sim} &
				\prod_{1\leq i\leq w_P:\ p\nmid i} \Z^d/p\Z^d
		\end{tikzcd}\]
		Recall the explicit description of $\Ext^1_\Z(\Pic^0_K, G)$ and of the boundary map $\delta$ from \Cref{rmk:ext-made-concrete}.

		By \Cref{lem:linear-independence}, the elements $[y_1],\dots,[y_t]$ of the Selmer group $\Sel_K$ are $\F_p$-linearly independent. By \Cref{cor:selmer-embedding}\ref{item:selmer-embedding-a}, so are their images $[y_i/\pi_P^{v_P(y_i)}]$ in the groups in the bottom row of the diagram. Their images can thus be completed to a basis of the bottom right $\F_p$-vector space. Lifting the basis, we see that their images in the top right group can be completed to a $\Z/p^{r+1}\Z$-module basis of the top right group.
		
		In particular, for any elements $g_1,\dots,g_t$ of $G[p^{r+1}]$, there is a map $R_P^0/R_P^k\to G[p^{r+1}]$ (factoring through the top right group) sending $y_i/\pi_P^{v_P(y_i)}$ to $g_i$ for $i=1,\dots,t$. Thus, by the description of $\delta$ given in \Cref{rmk:ext-made-concrete}, every element of $I^{r+1}$ (corresponding to a tuple $([g_1],\dots,[g_t])$ in $\prod_{i=1}^tG[p^{r+1}]/p^{n_i}G[p^{r+1}]$) lies in the image $H_P^k$ of $\Hom(R_P^0/R_P^k, G)$ under the map $\delta$.
	\qedhere
	\end{enumalpha}
\end{proof}

\begin{corollary}\label{cor:filtrations-usually-match}
	For all but finitely many $P\in M_K$, for all $k\in\J_e$, we have
	\[
		H_P^k = I^r
		\qquad\textnormal{if}\qquad
		k_0\geq\dots\geq k_{r-1}>0=k_r=k_{r+1}=\dots.
	\]
\end{corollary}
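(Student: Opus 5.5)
By \Cref{cor:selmer-embedding}, for all but finitely many $P$ we may take $w_P = 1$. Fix such a $P$, and fix $k\in\J_e$ with $k_0\geq\dots\geq k_{r-1}>0=k_r=k_{r+1}=\cdots$. The plan is to prove the two inclusions $I^r\subseteq H_P^k$ and $H_P^k\subseteq I^r$ separately. Note that the bad set of places (those with $w_P>1$) does not depend on $k$, which is what the statement requires.

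For ``$\supseteq$'': if $r=0$ there is nothing to show, since $I^0=1$. Otherwise $k_{r-1}\geq 1 = w_P$, so \Cref{lem:filtration-comparison}\ref{item:filtration-comparison-b}, applied with index $r-1$, gives $I^{r}\subseteq H_P^k$ directly.

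For ``$\subseteq$'': apply \Cref{lem:filtration-comparison}\ref{item:filtration-comparison-a} with this $r$ to the pair $k$ and $l=0=(0,0,\dots)$. Both lie in $\J_e$ and satisfy $k_j=l_j=0$ for all $j\geq r$. Hence $H_P^k$ and $H_P^0$ have the same image in $\Ext^1_\Z(\Pic^0_K,G)/I^r$. Since $H_P^0=1$, the image of $H_P^k$ in this quotient is trivial, so $H_P^k\subseteq I^r$.

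Combining the two inclusions gives $H_P^k=I^r$. There is essentially no obstacle: the only points to check are that $k_{r-1}\geq w_P$ when $r\geq1$, which holds because $k_{r-1}>0$ and $w_P=1$, and that part~\ref{item:filtration-comparison-a} of the lemma allows comparison with the zero tuple, which it does.
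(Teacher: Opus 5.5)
Your proposal is correct and matches the paper's proof essentially verbatim. You get $H_P^k\subseteq I^r$ from part (a) of the filtration-comparison lemma with $l=0$ and $H_P^0=1$. You get $I^r\subseteq H_P^k$ from part (b) at index $r-1$, using $w_P=1$ for all but finitely many $P$, with the case $r=0$ trivial since $I^0=1$.
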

\begin{proof}
	Applying part (a) of the lemma with $l=0$ and recalling that $H_P^0=1$, we see that $H_P^k \subseteq I^r$. Conversely, according to \Cref{cor:selmer-embedding}, we can take $w_P=1$ for all but finitely many $P$, and then $k_{r-1}\geq w_P$, so part (b) of the lemma implies $I^r \subseteq H_P^k$. (The case $r=0$ is clear as $I^0 = 1$.)
\end{proof}

\begin{corollary}\label{cor:escaping-from-subgroup}
	Let $T$ be a subgroup of $\Ext^1_\Z(\Pic^0_K, G)$.
	Let $r = r(T)\geq0$ be the largest integer $r\leq e$ with $I^r \subseteq T$.
	\begin{enumalpha}
	\item
		For every $P\in M_K$, there is a finite set $S_P=S_P(T)\subseteq\J_{e-r}$ containing $(0,\dots)$ such that for any $k=(k_0,k_1,\dots)\in\J_e$, we have $H_P^k \subseteq T$ if and only if $(k_r,k_{r+1},\dots)\in S_P$.
	\item
		For all but finitely many $P$, we have $S_P = \{(0,\dots)\}$.
	\item
		If $T=\Ext^1_\Z(\Pic_K^0,G)$ or $g_K \leq 1$, then $S_P = \{(0,\dots)\}$ for all $P$.
	\end{enumalpha}
\end{corollary}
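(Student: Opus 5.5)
Fix $T$ and $r=r(T)$, so $I^r\subseteq T$ and, if $r<e$, $I^{r+1}\nsubseteq T$. For part (a), I will show that whether $H_P^k\subseteq T$ holds depends only on the tail $(k_r,k_{r+1},\dots)$. If $k,l\in\J_e$ agree in all coordinates $j\geq r$, then \Cref{lem:filtration-comparison}\ref{item:filtration-comparison-a} gives $H_P^k I^r = H_P^l I^r$. Since $I^r\subseteq T$, we get $H_P^k\subseteq T$ if and only if $H_P^kI^r\subseteq T$, and likewise for $l$. So the condition factors through the tail.

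Next I check that every tail occurs. Given any $(k_r,k_{r+1},\dots)\in\J_{e-r}$, the sequence with $k_j:=p^{r-j}k_r$ for $j<r$ lies in $\J_e$. Hence we may define $S_P$ as the set of tails $s\in\J_{e-r}$ for which some (equivalently every) $k$ with tail $s$ satisfies $H_P^k\subseteq T$. This set contains $(0,\dots)$: take $k=0$, for which $H_P^0=1\subseteq T$.

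For finiteness, first take $r=e$. Then $\J_0=\{(0,\dots)\}$, so $S_P$ is trivially finite. If $r<e$, suppose $s\in S_P$ and let $k$ have tail $s$. If $k_r\geq w_P$, then \Cref{lem:filtration-comparison}\ref{item:filtration-comparison-b} gives $I^{r+1}\subseteq H_P^k\subseteq T$, which contradicts the maximality of $r$. Hence $k_r<w_P$, and since $k_r\geq p k_{r+1}\geq\dots$, every tail coordinate is bounded by $w_P$. Only finitely many tails in $\J_{e-r}$ have $k_r<w_P$, so $S_P$ is finite.

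For (b) and (c), I use the places with $w_P=1$. By \Cref{cor:selmer-embedding}, all but finitely many $P$ have $w_P=1$, and if $g_K\leq1$ then every $P$ does. For such $P$, the bound $k_r<1$ forces $k_r=0$, and then all later coordinates vanish too, so $S_P=\{(0,\dots)\}$. The other case of (c) is $T=\Ext^1_\Z(\Pic_K^0,G)$. Then $r=e$, because $I^e$ is the whole group, and $\J_0$ has only the zero element, so again $S_P=\{(0,\dots)\}$ for all $P$.

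The main subtle point is the maximality argument. It needs $r<e$ in order to apply part (b) of \Cref{lem:filtration-comparison} with index $r$ and obtain the contradiction $I^{r+1}\subseteq T$. The case $r=e$ is harmless because $\J_0$ is trivial.
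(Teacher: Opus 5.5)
Your proof is correct and follows the paper's argument. It uses the same three steps: tail-dependence from \Cref{lem:filtration-comparison}\ref{item:filtration-comparison-a} together with $I^r\subseteq T$; the bound $k_r<w_P$ from \Cref{lem:filtration-comparison}\ref{item:filtration-comparison-b} and the maximality of $r$; and $w_P=1$ (via \Cref{cor:selmer-embedding}) for parts (b) and (c). Your split into $r=e$ and $r<e$ is the paper's split into $T=\Ext^1_\Z(\Pic^0_K,G)$ and $T$ proper, and checking that every tail in $\J_{e-r}$ is realized by some $k\in\J_e$ is a harmless extra detail the paper leaves implicit.
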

\begin{proof}
	The case $T = \Ext^1_\Z(\Pic_K^0,G)$ is clear, so assume that $T$ is a proper subgroup.
	Since $I^r\subseteq T$, the group $H_P^k$ is contained in $T$ if and only if its image in the quotient group $\Ext^1_\Z(\Pic^0_K, G)/I^r$ is contained in $T/I^r$. By \Cref{lem:filtration-comparison}\ref{item:filtration-comparison-a}, this image depends only on $(k_r,k_{r+1},\dots)$ and by \Cref{lem:filtration-comparison}\ref{item:filtration-comparison-b}, a necessary condition is that $k_r<w_P$. (If $k_r\geq w_P$, then $I^{r+1} \subseteq H_P^k \subseteq T$, which would contradict the maximality of $r$.) In particular, there are only finitely many possible values for $k_r$, which together with the condition that $k_i\geq p k_{i+1}$ for all $i$ implies that there are only finitely many possible sequences $(k_r,k_{r+1},\dots)$. This proves (a). If $w_P = 1$, we see that the only possible sequence is $(0,\dots)$, so the other claims about $S_P$ in (b) and (c) then follow from \Cref{cor:selmer-embedding}.
\end{proof}

\begin{remark}\label{rmk:lagemann-higher-exponents}
	The statements in \cite[Section~4]{lagemann-artin-schreier-witt} would imply that $H_P^{(1,0,\dots)} = \Ext^1_\Z(\Pic^0_K, G)$ for all but finitely many $P$, but \Cref{cor:filtrations-usually-match} shows that $H_P^{(1,0,\dots)} = I^1$ for all but finitely many $P$, which is smaller than $\Ext^1_\Z(\Pic^0_K, G)$ if $\Pic^0_K[p] \neq \Pic^0_K[p^2]$ and $G \neq G[p]$ (for example by the concrete description in \Cref{rmk:ext-made-concrete}). The issue occurs in Lagemann's Proposition~4.2. In his notation, for any module $\mathfrak m = \prod_P P^{a_P}$, the kernel of the group homomorphism $S_{i,\mathfrak m} \to S_{1,\mathfrak m}$ is not $S_{1,\mathfrak m}$ but $S_{1,\mathfrak n}$ with $\mathfrak n := \prod_P P^{\lceil a_P/p\rceil}$. Consequently, some of his later results require correction, in particular Lemma~4.5 and Lemma~5.2.
\end{remark}

\section{Generating functions}

\subsection{Character sum}

The exact sequence (\ref{eq:ext-exact-sequence}) allows us to write the generating function $F_K(X_0,\dots,X_{e-1})$ as a sum of finitely many Euler products:

\begin{lemma}\label{lem:character-sum}
	We can write $F_K(X_0,\dots,X_{e-1})$ as the following sum over the characters $\chi$ of the finite group $\Ext^1_\Z(\Pic^0_K, G)$:
	\[
		F_K(X_0,\dots,X_{e-1})
		= \sum_{\chi}
		\prod_{P\in M_K}
		F_{K_P,\chi}\Bigl(X_0^{\deg(P)}, \dots, X_{e-1}^{\deg(P)}\Bigr)
	\]
	with
	\[
		F_{K_P,\chi}(X_0, \dots, X_{e-1})
		:= \sum_{\varphi \in \Hom(R_P^0, G)}
		\chi(\delta(\varphi))
		\prod_{i=0}^{e-1} X^{\jump_{P,i}(\varphi)-p\jump_{P,i+1}(\varphi)}.
	\]
\end{lemma}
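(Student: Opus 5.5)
The plan is to pass from global to local data using the exact sequence (\ref{eq:ext-exact-sequence}) and then detect the condition $\delta(\varphi)=0$ by orthogonality of characters of the finite group $E:=\Ext^1_\Z(\Pic^0_K,G)$.

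\emph{Step 1: reduce to the kernel of $\delta$.} Consider the restriction map $\rho:\Hom(\A_K^\times/K^\times,G)\to\bigoplus_P\Hom(\O_P^\times,G)$. Each fiber of $\rho$ over its image has size $|G|\cdot|\Hom(\Pic^0_K,G)|$, and the image is exactly $\ker\delta$. The heights $\jump_{P,i}(\varphi)$ are determined by \Cref{eq:lastjump-vs-kunits}, which only involves the restriction of $\varphi$ to $U_P^1\subseteq\O_P^\times$. Hence the monomial attached to $\varphi$ depends only on $\rho(\varphi)=(\varphi_P)_P$. It equals $\prod_P\prod_i (X_i^{\deg P})^{\jump_{P,i}(\varphi_P)-p\jump_{P,i+1}(\varphi_P)}$, because $\deg$ of the divisor $\jump_i(\varphi)-p\jump_{i+1}(\varphi)$ is the sum of the local multiplicities times $\deg P$. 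Therefore
\[
F_K=\frac{|\Hom(\Pic^0_K,G)|}{1}\sum_{(\varphi_P)\in\ker\delta}\prod_P m_P(\varphi_P),
\]
where $m_P$ denotes the local monomial.

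\emph{Step 2: character orthogonality.} Write $\mathbf 1[\delta(x)=0]=|E|^{-1}\sum_\chi\chi(\delta(x))$. Since $\delta$ is a homomorphism on the direct sum, $\chi(\delta((\varphi_P)))=\prod_P\chi(\delta(\varphi_P))$. The sum over the direct sum $\bigoplus_P\Hom(\O_P^\times,G)$ then factors formally as $\prod_P F_{K_P,\chi}(X^{\deg P})$. Almost all components are trivial, and the trivial $\varphi_P$ contributes the constant term $1$.

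The prefactor is $|\Hom(\Pic^0_K,G)|/|E|$. This equals $1$ because both groups depend only on $\Pic^0_K[p^\infty]\simeq\prod\Z/p^{n_i}$. For each cyclic factor we have $|\Hom(\Z/p^n,G)|=|G[p^n]|=|G/p^nG|=|\Ext^1(\Z/p^n,G)|$, using the exact sequence $0\to G[p^n]\to G\to G\to G/p^nG\to0$. This gives the claimed identity, with the identification $\Hom(R_P^0,G)=\Hom(\O_P^\times,G)$ from the previous section.

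\emph{Main obstacle: the formal infinite product.} The difficulty is justifying this product as an identity in $\Z\llbracket X_0,\dots,X_{e-1}\rrbracket$. I would argue that each nontrivial $\varphi_P$ has $\jump_{P,0}\geq1$ and $\deg$ of the local exponent vector $\geq1$. Indeed, $\sum_i(\jump_{P,i}-p\jump_{P,i+1})$ is positive when $\jump_{P,0}>0$, since the telescoped terms are nonnegative and at least one is nonzero. So the factor for $P$ is $1+O(X^{\deg P})$, and there are finitely many places of each degree. Also, by \Cref{lem:tau}, only finitely many local $\varphi_P$ have bounded jumps. Together these give convergence and allow the expansion termwise. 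Finally, nontrivial $\varphi_P$ with all jumps zero cannot occur, since $\varphi_P$ is then trivial on $U_P^1$ and hence trivial, as $G$ is a $p$-group.
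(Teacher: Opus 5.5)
Your proposal is correct and follows the paper's proof essentially step by step: you count the $|G|\cdot|\Hom(\Pic^0_K,G)|$ lifts of each element of $\ker\delta$, detect $\ker\delta$ by orthogonality of characters of $\Ext^1_\Z(\Pic^0_K,G)$, check via a cyclic decomposition that the prefactor $|\Hom(\Pic^0_K,G)|/|\Ext^1_\Z(\Pic^0_K,G)|$ equals $1$, and factor the resulting sum as an Euler product using that $\chi\circ\delta$ is a homomorphism and $\jump_i(\varphi)=\sum_P \jump_{P,i}(\varphi)\cdot P$. Your extra paragraph justifying the infinite product in $\Z\llbracket X_0,\dots,X_{e-1}\rrbracket$ is a correct addition that the paper leaves implicit: each nontrivial local term has total degree at least $\deg P$, there are finitely many places of each degree, and finitely many local maps have given jumps.
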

Note that the constant term in each Euler factor is~$1$: indeed, for every place $P$, there is exactly one map $\varphi\in\Hom(R_P^0,G)$ with $\jump_{P,i}(\varphi)=0$ for all $i$, namely the trivial map, which has $\chi(\delta(\varphi))=1$.
\begin{proof}
	By the exact sequence (\ref{eq:ext-exact-sequence}) and our identification $\Hom(\O_P^\times, G) = \Hom(R_P^0, G)$, any $\varphi\in\bigoplus_{P\in M_K}\Hom(R_P^0, G)$ has $|G|\cdot|{\Hom(\Pic^0_K, G)}|$ lifts $\A_K^\times / K^\times \to G$ if $\delta(\varphi) = 0$ and no lifts if $\delta(\varphi) \neq 0$.
	By orthogonality of characters, we can thus write $F_K(X_0,\dots,X_{e-1})$ as a sum over all characters $\chi$ of the finite abelian group $\Ext^1_\Z(\Pic^0_K, G)$:
	\begin{align*}
		&F_K(X_0,\dots,X_{e-1}) \\
		&= \frac1{|G|} \sum_{\substack{\varphi\in\Hom(\Gamma_K, G)}}
			\prod_{i=0}^{e-1} X_i^{\deg(\jump_i(\varphi)-p\jump_{i+1}(\varphi))}\\
		&= \frac{|{\Hom(\Pic^0_K,G)}|}{|{\Ext^1_\Z(\Pic^0_K,G)}|}
		\sum_{\chi}
		\sum_{\varphi\in\bigoplus_P\Hom(R_P^0, G)}
		\chi(\delta(\varphi))
		\prod_{i=0}^{e-1} X_i^{\deg(\jump_i(\varphi)-p\jump_{i+1}(\varphi))}.
	\end{align*}

	Since $\Pic^0_K$ is a finite abelian group, we have $|{\Hom(\Pic^0_K, G)}| = |{\Ext^1_\Z(\Pic^0_K,G)}|$. (An isomorphism $\Pic^0_K\simeq\prod_{i=1}^t \Z/m_i\Z$ induces natural isomorphisms $\Hom(\Pic^0_K, G) \simeq \prod_i G[m_i]$ and $\Ext^1_\Z(\Pic^0_K, G) \simeq \prod_i G/m_i G$.)

	Since $\chi\circ\delta$ is a group homomorphism and $\jump_i(\varphi) = \sum_P \jump_{P,i}(\varphi)\cdot P$, this can be written as
	\begin{align*}
		&F_K(X_0,\dots,X_{e-1}) \\
		&= \sum_{\chi}
		\sum_{(\varphi_P)_P\in\bigoplus_P\Hom(R_P^0, G)}
		\prod_{P\in M_K} \left(\chi(\delta(\varphi_P)) \prod_{i=0}^{e-1} X_i^{(\jump_{P,i}(\varphi_P) - p\jump_{P,i+1}(\varphi_P))\cdot\deg(P)}\right) \\
		&= \sum_{\chi} \prod_{P\in M_K} F_{K_P,\chi}\Bigl(X_0^{\deg(P)}, \dots, X_{e-1}^{\deg(P)}\Bigr).
		\qedhere
	\end{align*}
\end{proof}

\subsection{The local generating functions}

In this subsection, we compute the local generating functions $F_{K_P,\chi}(X_0,\dots,X_{e-1})$, generalizing both \cite[Theorem~4.2(b)]{gundlach-abelian-extensions-by-artin-schreier} and \cite[Remark~4.6]{gundlach-abelian-extensions-by-artin-schreier}:

\begin{theorem}\label{thm:local-gf}
	Let $\chi$ be any character of $\Ext^1_\Z(\Pic^0_K, G)$. Let $r := r(\ker\chi)$ be the largest integer $r\leq e$ with $I^r \subseteq\ker\chi$, as in \Cref{cor:escaping-from-subgroup}. Let $c_i,d_i$ be as in \Cref{def:cd}. For $i=0,\dots,e-1$, let
	\[
		H_{P,i}(X_i)
		:= \frac{
			(1 - Q_P^{p c_i} X_i^p) (1 - Q_P^{d_i} X_i)
		}{
			(1 - Q_P^{c_i} X_i) (1 - Q_P^{d_{i+1}} X_i^p)
		}.
	\]
	\begin{enumalpha}
	\item
		If $\chi$ is the trivial character, then
		$
			F_{K_P,\chi}(X_0,\dots,X_{e-1}) = \prod_{i=0}^{e-1} H_{P,i}(X_i)
		$.\\
		Otherwise, $F_{K_P,\chi}(X_0,\dots,X_{e-1}) = (\prod_{i=0}^{r-1} H_{P,i}(X_i))(1-Q_P^{d_{r}}X_{r})B_{P,\chi}(X_{r},\dots,X_{e-1})$
		for some polynomial $B_{P,\chi}(X_{r},\dots,X_{e-1})$ with integer coefficients and constant coefficient~$1$.
	\item
		For all but finitely many $P$, we have $B_{P,\chi}(X_{r},\dots,X_{e-1}) = 1$.
	\item
		If $g_K\leq 1$, then $B_{P,\chi}(X_{r},\dots,X_{e-1}) = 1$ for all $P$.
	\end{enumalpha}
\end{theorem}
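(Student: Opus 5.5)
The plan is to compute $F_{K_P,\chi}$ by Möbius inversion from the cumulative counts that \Cref{lem:jump-from-R} and \Cref{lem:tau} give us. For $k\in\J_e$ let
\[
	A_\chi(k) := \sum_{\varphi\in\Hom(R_P^0/R_P^k,G)} \chi(\delta(\varphi)).
\]
By orthogonality, applied to the homomorphism $\chi\circ\delta$ on the group $\Hom(R_P^0/R_P^k,G)$, this equals $|{\Hom(R_P^0/R_P^k,G)}| = Q_P^{\tau(k)}$ when $H_P^k\subseteq\ker\chi$, and $0$ otherwise. By \Cref{lem:jump-from-R}, $A_\chi(k)$ is the $\chi$-weighted count of all $\varphi$ whose jump vector $j(\varphi)=(\jump_{P,i}(\varphi))_i$ is componentwise $\leq k$.

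To isolate the $\varphi$ with jump vector exactly $k$, I will first extend $A_\chi$ to arbitrary vectors $k\in\Z_{\geq0}^e$. Since $j(\varphi)\in\J_e$ always, the condition $j(\varphi)\leq k$ is equivalent to $j(\varphi)\leq \hat k$, where $\hat k$ is the largest element of $\J_e$ below $k$ (computed from $i=e-1$ downward by $\hat k_i=\min(k_i,\lfloor \hat k_{i-1}/p\rfloor)$; $\J_e$ is closed under componentwise $\min$, so $\hat k$ exists). Set $A_\chi(k):=A_\chi(\hat k)$. Box inclusion–exclusion then gives
\[
	G_\chi(k) := \sum_{j(\varphi)=k}\chi(\delta(\varphi)) = \sum_{\varepsilon\in\{0,1\}^e}(-1)^{|\varepsilon|}A_\chi(k-\varepsilon),
\]
and $F_{K_P,\chi}=\sum_{k\in\J_e}G_\chi(k)\prod_i X_i^{n_i}$ with $n_i=k_i-pk_{i+1}$.

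\emph{Trivial character.} Here $A(k)=Q_P^{\tau(k)}$. I will pass to the coordinates $n_i = v_i+pw_i$ of \Cref{lem:tau}, in which $\tau$ is additive: $\tau(k)=\sum_i(c_iv_i+d_{i+1}w_i)$. The generating function $\sum_{k\in\J_e}A(k)\prod_iX_i^{n_i}$ then factors as a product over $i$ of
\[
	\Bigl(\sum_{v=0}^{p-1}Q_P^{c_iv}X_i^v\Bigr)\Big/\bigl(1-Q_P^{d_{i+1}}X_i^p\bigr) = \frac{1-Q_P^{pc_i}X_i^p}{(1-Q_P^{c_i}X_i)(1-Q_P^{d_{i+1}}X_i^p)}.
\]
The remaining factor $(1-Q_P^{d_i}X_i)$ in $H_{P,i}$ should come from the inclusion–exclusion. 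The key point is that lowering $k_i$ by one (and taking the hull) shifts $n_i$ by one and multiplies the count by $Q_P^{d_i}$ in the right way, which is where \Cref{rmk:cd}\ref{rmk:cd:diff} ($pc_{i-1}-d_i=c_{i-1}-d_{i-1}$) enters. I would verify this one coordinate at a time from $i=e-1$ downward. A cleaner alternative is to obtain the formula for the trivial character by citing \cite[Remark~4.6]{gundlach-abelian-extensions-by-artin-schreier}, since the trivial-character count is purely local and \Cref{lem:image-of-R} and \Cref{lem:tau} are exactly its inputs.

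\emph{Nontrivial character.} By \Cref{cor:escaping-from-subgroup}, $A_\chi(k)\neq0$ iff $(k_r,k_{r+1},\dots)\in S_P$, a finite set containing $0$, and in that case $A_\chi(k)=Q_P^{\tau(k)}$. The coordinates $n_0,\dots,n_{r-1}$ are then unconstrained, so the sum over them factors exactly as in the trivial case and produces $\prod_{i<r}H_{P,i}(X_i)$. For the variables $X_r,\dots,X_{e-1}$, only finitely many tails $(k_r,\dots)$ contribute, so they contribute a polynomial. The constant term is $1$, from the trivial $\varphi$. The factor $(1-Q_P^{d_r}X_r)$ arises from the $\varepsilon_r$ part of the inclusion–exclusion, which, coupled with the top $i=r-1$ geometric sum, leaves an uncancelled term $-Q_P^{d_r}X_r$ times the tail-$0$ contribution.

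Parts (b) and (c) then follow because $S_P=\{0\}$ for almost all $P$, and for all $P$ when $g_K\leq1$ (\Cref{cor:escaping-from-subgroup}(b),(c)). In that case the computation is explicit and gives $B_{P,\chi}=1$.

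The main obstacle is the bookkeeping in the inclusion–exclusion. Because $\J_e$ is not a product in the $k$-coordinates, shifts $k-\varepsilon$ leave $\J_e$ and must be replaced by hulls. I would first treat $e=1$ completely (where everything is a one-variable geometric series), then induct on $e$ by peeling off the last coordinate $k_{e-1}$.
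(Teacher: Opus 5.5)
\textbf{Where the proposal stands.} Your framework is right and matches the paper's starting point. Orthogonality together with \Cref{lem:jump-from-R}, \Cref{lem:tau} and \Cref{cor:escaping-from-subgroup} gives the cumulative counts, whose generating function factors in the $(v_i,w_i)$-coordinates. Inclusion--exclusion then recovers the exact counts. The paper organizes this as one-coordinate differencing from $f=e-1$ down to $f=0$, and the subtraction is simply omitted when $k_f=pk_{f+1}$; this also disposes of your hulls.

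\textbf{The gap: where the factors $(1-Q_P^{d_f}X_f)$ come from.} You defer exactly this step, and it is the real content of the theorem. The mechanism you sketch is not correct. Lowering $k_f$ by one does not just shift $n_f$: it also raises $n_{f-1}=k_{f-1}-pk_f$ by $p$. So the subtracted term is not a shifted copy of the same series in $X_f$, and \Cref{rmk:cd}\ref{rmk:cd:diff} plays no role.

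What is needed is a shift invariance of the \emph{partially differenced} coefficients $a^{(h)}(k)$, i.e.\ the $\chi$-weighted counts with $\jump_{P,j}(\varphi)\leq k_j$ for $j<h$ and $\jump_{P,j}(\varphi)=k_j$ for $j\geq h$. For $0\leq g<h\leq r'=\min(e,r+1)$ one needs
\[
a^{(h)}(k_0+p^g,\dots,k_{g-1}+p,k_g,k_{g+1},\dots)=Q_P^{d_g}\,a^{(h)}(k).
\]
\begin{itemize}
\item At $h=r'$ this holds because the shift raises only $w_{g-1}$ by one (and $d_0=0$).
\item It survives each differencing step. For $g<f$ the shift only touches coordinates below $g$, so it commutes with lowering $k_f$ and preserves the case distinction $k_f>pk_{f+1}$.
\item Then substitute $k'=(k_0-p^f,\dots,k_{f-1}-p,k_f-1,k_{f+1},\dots)$, which lowers $n_f$ by exactly one and fixes every other $n_i$.
\item Use that $k'\in\J_e$ if and only if $k\in\J_e$ and $k_f>pk_{f+1}$.
\end{itemize}
This yields $F^{(f)}=(1-Q_P^{d_f}X_f)F^{(f+1)}$ for all $f<r'$.

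\textbf{Nontrivial characters.} The range $g\leq r$ is exactly what matters here. Such shifts leave the tail $(k_r,k_{r+1},\dots)$, and hence the condition $(k_r,\dots)\in S_P$, untouched. For $g=r+1$ the shift would alter $k_r$, and the identity fails. Consequently, differencing in the coordinates $f>r$ must be carried out inside the finitely supported tail function; this is what produces the polynomial $B_{P,\chi}$. Only the coordinates $f\leq r$ produce factors $(1-Q_P^{d_f}X_f)$. In particular, $(1-Q_P^{d_r}X_r)$ multiplies all of $B_{P,\chi}$, not merely ``the tail-$0$ contribution'' as you write. Your description would not give the claimed factorization whenever $B_{P,\chi}\neq1$. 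Citing the earlier paper's remark could at most cover the trivial character; the nontrivial ones need this argument.

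\textbf{Smaller slips.}
\begin{itemize}
\item $\hat k$ is computed upward from $i=0$, not downward from $i=e-1$.
\item $\hat k$ exists because $\J_e$ is closed under componentwise \emph{maximum}, not minimum.
\item $A_\chi$ must be set to $0$ whenever a coordinate of $k-\varepsilon$ is negative.
\end{itemize}
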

\begin{proof}
	Let $r' := \min(e,r+1)$.
	For any $f\in\Z_{\geq0}\cup\{\infty\}$, we define the numbers
	\[
		a^{(f)}(k)
		:= \sum_{\substack{
			\varphi\in\Hom(R_P^0, G):\\
			\jump_{P,j}(\varphi) \leq k_j\textnormal{ for all }0\leq j<f,\\
			\jump_{P,j}(\varphi) = k_j\textnormal{ for all }j\geq f
		}}
			\chi(\delta(\varphi))
		\qquad\textnormal{for }k\in\J_e
	\]
	and the corresponding generating function
	\[
		F^{(f)}(X_0,\dots,X_{e-1})
		:= \sum_{k\in\J_e} a^{(f)}(k) \cdot \prod_{i=0}^{e-1} X_i^{k_i-pk_{i+1}}.
	\]

	Our goal is to compute the function $F_{K_P,\chi}$, which is by definition equal to $F^{(0)}$. We first compute (the coefficients of) the function $F^{(\infty)}$, which turns out to be equal to $F^{(e)}$. This is facilitated by the maps $\varphi$ appearing in the sum defining $a^{(\infty)}$ forming a group. We then use two separate downward inductions: first, we compute $F^{(e-1)},\dots,F^{(r')}$ using that there are only finitely many tuples $(k_r,k_{r+1},\dots)$ for which $a^{(f)}(k)$ can be nonzero; then we compute $F^{(r'-1)},\dots,F^{(0)}$ by using a small trick to turn a straightforward recursion for their coefficients into a recursion for the generating functions.

	\medskip\noindent\textbf{Computing $a^{(e)} = a^{(\infty)}$.}\quad
	Since every $\varphi\in\Hom(R_P^0,G)$ and every $k\in\J_e$ automatically satisfies $\jump_{P,j}(\varphi)=0=k_j$ for all $j\geq e$, we have $a^{(e)}(k) = a^{(\infty)}(k)$ for all $k\in\J_e$.
	
	By \Cref{lem:jump-from-R}, the maps $\varphi$ appearing in the sum defining $a^{(\infty)}(k)$ correspond to the elements of the group $\Hom(R_P^0 / R_P^k, G)$. By orthogonality of characters, the sum vanishes unless $\delta(\Hom(R_P^0/R_P^k,G)) = H^k_P$ is contained in $\ker\chi$. By \Cref{cor:escaping-from-subgroup}, this inclusion is equivalent to $(k_r,k_{r+1},\dots)\in S_{P,\chi}$ for a specific finite subset $S_{P,\chi} = S_P(\ker\chi)$ of $\J_{e-r}$ containing $(0,\dots)$. Thus,
	\[
		a^{(e)}(k) = a^{(\infty)}(k) =
		\begin{cases}
			|{\Hom(R_P^0/R_P^k, G)}| &\textnormal{if }(k_r,k_{r+1},\dots)\in S_{P,\chi},\\
			0 &\textnormal{otherwise}.
		\end{cases}
	\]
	Together with \Cref{lem:tau}, writing $k_i-pk_{i+1}=v_i+pw_i$ with $v_i\in\{0,\dots,p-1\}$ and $w_i\in\Z_{\geq0}$, we see that
	\[
		a^{(e)}(k) = a^{(\infty)}(k) =
		Q_P^{\sum_{i=0}^{r-1}(c_iv_i+d_{i+1}w_i)} \cdot b^{(e)}(k_r,k_{r+1},\dots),
	\]
	where
	\[
		b^{(e)}(k_r,k_{r+1},\dots) :=
		\begin{cases}
			Q_P^{\sum_{i=r}^{e-1}(c_iv_i+d_{i+1}w_i)}
			&\textnormal{if }(k_r,k_{r+1},\dots)\in S_{P,\chi},\\
			0 &\textnormal{otherwise}.
		\end{cases}
	\]
	
	Since $(0,\dots)\in S_{P,\chi}$, we have $b^{(e)}(0,\dots)=1$.
	If $\chi$ is the trivial character, then $r=r'=e$, so $k_r=k_{r+1}=\dots=0$ for all $k\in\J_e$.

	\medskip\noindent\textbf{A recursion.}\quad
	For any $0\leq f < e$, since every $\varphi$ satisfies $\jump_{P,f}(\varphi)\geq p\jump_{P,f+1}(\varphi)$, we have the recursion
	\begin{equation}\label{eq:akf-recursion}
		a^{(f)}(k) =
		\begin{cases}
			a^{(f+1)}(k) - a^{(f+1)}(k_0,\dots,k_{f-1},k_{f}-1,k_{f+1},\dots) &\textnormal{if }k_f > pk_{f+1},\\
			a^{(f+1)}(k) &\textnormal{if }k_f=pk_{f+1}.
		\end{cases}
	\end{equation}
	
	\medskip\noindent\textbf{Computing $a^{(e-1)},\dots,a^{(r')}$.}\quad
	By downward induction, this implies that for every $r'\leq f \leq e$,
	\begin{equation}\label{eq:ar}
		a^{(f)}(k) =
		Q_P^{\sum_{i=0}^{r-1} (c_i v_i + d_{i+1} w_i)} \cdot b^{(f)}(k_r,k_{r+1},\dots)
	\end{equation}
	with
	\[
		b^{(f)}(k_r,k_{r+1},\dots) :=
		\begin{cases}
			b^{(f+1)}(k_r,\dots) - b^{(f+1)}(k_r,\dots,k_{f-1},k_{f}-1,k_{f+1},\dots) &\textnormal{if }k_f > pk_{f+1},\\
			b^{(f+1)}(k_r,\dots) &\textnormal{if }k_f=pk_{f+1}.
		\end{cases}
	\]
	
	We have $b^{(f)}(0,\dots)=1$.
	If $\chi$ is nontrivial, then $r' = r+1$. As the set $S_{P,\chi}$ is finite, there are only finitely many different values $k_r$ for which we can have $b^{(f)}(k_r,k_{r+1},\dots)\neq0$. The inequalities $k_r\geq k_{r+1}\geq\dots$ on $\J_e$ then imply that each $b^{(f)}$ has finite support. By \Cref{cor:escaping-from-subgroup}, we have $S_{P,\chi}=\{(0,\dots)\}$, for all but finitely many $P$, and if $g_K\leq1$ holds, then for all $P$. In this case, the only possible value for $k_r$ is $0$, so the support of $b^{(f)}$ is also $\{(0,\dots)\}$.

	\medskip\noindent\textbf{Computing $F^{(r')}$.}\quad
	We define
	\[
		B(X_r,\dots,X_{e-1})
		:= \sum_{\substack{
				(k_r,k_{r+1},\dots)\in\J_{e-r}
			}}
			b^{(r')}(k_r,k_{r+1},\dots)
			\cdot \prod_{i=r}^{e-1} X_i^{k_i - pk_{i+1}}.
	\]
	We have shown above that $B$ is a polynomial with constant coefficient $1$, and that it is the constant polynomial $1$ for all but finitely many $P$ and if $g_K\leq1$ holds, then for all $P$.

	Now,
	\begin{align*}
		F^{(r')}(X_0,\dots,X_{e-1})
		&= \sum_{\substack{
			k\in\J_e
		}}
			a^{(r')}(k)
			\cdot \prod_{i=0}^{e-1} X_i^{k_i-pk_{i+1}} \\
		&\stackrel{\mathclap{(\ref{eq:ar})}}= \sum_{\substack{
			k\in\J_e:\\
			k_i-pk_{i+1}=v_i+pw_i
		}}
			Q_P^{\sum_{i=0}^{r-1}(c_i v_i + d_{i+1}w_i)}
			\cdot b^{(r')}(k_r,k_{r+1},\dots)
			\cdot \prod_{i=0}^{e-1} X_i^{k_i-pk_{i+1}} \\
		&= \sum_{\substack{
			v_0,\dots,v_{r-1}\in\{0,\dots,p-1\}\\
			w_0,\dots,w_{r-1}\in\Z_{\geq0}
		}}
			Q_P^{\sum_{i=0}^{r-1} (c_i v_i + d_{i+1} w_i)}
			\cdot B(X_r,\dots,X_{e-1})
			\cdot \prod_{i=0}^{r-1} X_i^{v_i+pw_i} \\
		&= \prod_{i=0}^{r-1}
			\left(
				\bigg(\sum_{v_i=0}^{p-1} Q_P^{c_i v_i} X_i^{v_i}\bigg)
				\bigg(\sum_{w_i=0}^\infty Q_P^{d_{i+1}w_i} X_i^{pw_i}\bigg)
			\right)
			\cdot B(X_r,\dots,X_{e-1}) \\
		&= \prod_{i=0}^{r-1}
			\left(
				\frac{1-(Q_P^{c_i}X_i)^p}{1-Q_P^{c_i}X_i}
				\cdot \frac{1}{1-Q_P^{d_{i+1}}X_i^p}
			\right)
			\cdot B(X_r,\dots,X_{e-1}).
	\end{align*}

	\medskip\noindent\textbf{Computing $F^{(r'-1)},\dots,F^{(0)}$.}\quad
	The recursion for the coefficients $a^{(f)}(k)$ does not \emph{immediately} produce a recursion for the generating functions $F^{(f)}$ because $k'\in\J_e$ does not imply $(k_0',\dots,k_{f-1}',k_f'+1,k_{f+1}',\dots)\in\J_e$. We need a small trick: we first use downward induction over $h$ to prove that for all $0\leq g < h\leq r'$ and all $k\in\J_e$, we have
	\begin{equation}\label{eq:akadd}
		a^{(h)}(k_0+p^g,\dots,k_{g-1}+p,k_g,k_{g+1},\dots)
		= Q_P^{d_g} \cdot a^{(h)}(k).
	\end{equation}

	For $h=r'$, this formula follows from \Cref{eq:ar} since $g\leq r$. (For $g\geq1$, the change of variables $k\mapsto(k_0+p^g,\dots,k_{g-1}+p,k_g,k_{g+1})$ increases the value of $w_{g-1}$ in \Cref{eq:ar} by~$1$ and does not change any other values $v_i,w_i$. For $g=0$, it does not change any of the values, and we have $d_0=0$.)

	The induction step follows from our recursion formula in \Cref{eq:akf-recursion} because the change of variables $k\mapsto(k_0+p^g,\dots,k_{g-1}+p,k_g,k_{g+1})$ commutes with $k\mapsto(k_0,\dots,k_{f-1},k_f-1,k_{f+1},\dots)$.

	By \Cref{eq:akf-recursion}, for any $0\leq f < r'$, we have
	\begin{align*}
		& F^{(f+1)}(X_0,\dots,X_{e-1}) - F^{(f)}(X_0,\dots,X_{e-1}) \\
		&= \sum_{\substack{k\in\J_e:\\k_f>pk_{f+1}}}
			a^{(f+1)}(k_0,\dots,k_{f-1},k_f-1,k_{f+1},\dots)
			\cdot \prod_{i=0}^{e-1} X_i^{k_i-pk_{i+1}}.
	\end{align*}
	We now perform the change of variables
	\[
		k \mapsto
		k' := (k_0-p^f,\dots,k_{f-1}-p,k_{f}-1,k_{f+1},\dots).
	\]
	The important point here is that $k'$ lies in $\J_e$ if and only if $k$ lies in $\J_e$ and satisfies $k_f>pk_{f+1}$. Applying \Cref{eq:akadd}, we thus obtain
	\begin{align*}
		&F^{(f+1)}(X_0,\dots,X_{e-1}) - F^{(f)}(X_0,\dots,X_{e-1}) \\
		&= \sum_{k'\in\J_e}
			Q_P^{d_f} a^{(f+1)}(k') \cdot X_f \prod_{i=0}^{e-1} X_i^{k_i'-pk_{i+1}'}
		= Q_P^{d_f} X_f \cdot F^{(f+1)}(X_0,\dots,X_{e-1}),
	\end{align*}
	so
	\[
		F^{(f)}(X_0,\dots,X_{e-1})
		= F^{(f+1)}(X_0,\dots,X_{e-1}) \cdot (1 - Q_P^{d_f} X_f)
		\qquad\textnormal{for all }0\leq f < r'.
	\]
	Therefore,
	\begin{equation}\label{eq:FKPtriv-from-Fe}
	\begin{aligned}
		F_{K_P,\chi}(X_0,\dots,X_{e-1})
		&= F^{(0)}(X_0,\dots,X_{e-1}) 
		= F^{(r')}(X_0,\dots,X_{e-1}) \cdot \prod_{i=0}^{r'-1} (1-Q_P^{d_i}X_i).
	\end{aligned}
	\end{equation}
	If $\chi$ is the trivial character, then $r=r'=e$ and $B=1$. Otherwise, $r'=r+1$. The result follows by plugging in our formula for $F^{(r')}$.
\end{proof}

\subsection{The global generating function}

We can now describe the global generating function $F_K(X_0,\dots,X_{e-1})$ in terms of the Hasse--Weil zeta function of $K$,
\[
	Z_K(X)
	= \sum_{D\geq0\textnormal{ divisor on }C_K} X^{\deg(D)}
	= \prod_{P\in M_K} \frac{1}{1-X^{\deg(P)}},
\]
which is well known to be a rational function.

For $i=0,\dots,e-1$, we define
\[
	H_i(X_i)
	:= \frac{
		Z_K(q^{c_i} X_i) Z_K(q^{d_{i+1}} X_i^p)
	}{
		Z_K(q^{p c_i} X_i^p) Z_K(q^{d_i} X_i)
	}.
\]
Recalling the definition of $H_{P,i}(X_i)$ in \Cref{thm:local-gf} and that $Q_P = q^{\deg(P)}$, we see that
\[
	\prod_{P\in M_K} H_{P,i}{\left(X_i^{\deg(P)}\right)} = H_i(X_i).
\]

\begin{theorem}[cf.\ \Cref{thm:intro-main}]\label{thm:main}
	The generating function $F_K(X_0,\dots,X_{e-1})$ is rational. Specifically, the generating function is of the form
	\begin{equation*}
		F_K(X_0,\dots,X_{e-1})
		= \prod_{i=0}^{e-1} H_i(X_i)
		+ \sum_{\chi\neq\triv}
			\left(\prod_{i=0}^{r_\chi-1}
			H_i(X_i)\right)
			\cdot \frac1{Z_K(q^{d_{r_\chi}} X_{r_\chi})}
			\cdot B_{\chi}(X_{r_\chi},\dots,X_{e-1}),
	\end{equation*}
	where the sum is over all nontrivial characters $\chi$ of the finite group $\Ext^1_\Z(\Pic_K^0, G)$ and $c_0,\dots,c_e$ and $d_0,\dots,d_e$ are as in \Cref{def:cd}.

	For any nontrivial character, $0\leq r_\chi<e$ and $B_\chi$ is a polynomial with integer coefficients and constant coefficient $1$. If $K$ has genus $g_K\leq1$, then $B_\chi=1$.
\end{theorem}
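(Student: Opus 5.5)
The plan is to combine \Cref{lem:character-sum} with \Cref{thm:local-gf} and take the product over all places, character by character. By \Cref{lem:character-sum},
\[
	F_K(X_0,\dots,X_{e-1}) = \sum_\chi \prod_{P\in M_K} F_{K_P,\chi}\bigl(X_0^{\deg(P)},\dots,X_{e-1}^{\deg(P)}\bigr),
\]
where each Euler factor has constant term $1$, so the infinite products converge formally in $\Z\llbracket X_0,\dots,X_{e-1}\rrbracket$. Only finitely many places have a given degree, so each coefficient depends on finitely many factors. This convergence is what allows me to rearrange the infinite product of products below.

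For the trivial character, \Cref{thm:local-gf}(a) gives $F_{K_P,\triv} = \prod_{i=0}^{e-1} H_{P,i}(X_i)$. Taking the product over $P$ and using the identity $\prod_P H_{P,i}(X_i^{\deg(P)}) = H_i(X_i)$ recorded before the theorem gives the main term $\prod_{i=0}^{e-1} H_i(X_i)$. Behind that identity is the Euler product $Z_K(q^aX^b) = \prod_P (1-Q_P^aX^{b\deg(P)})^{-1}$, applied to each of the four factors of $H_{P,i}$.

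For a nontrivial $\chi$, let $r_\chi := r(\ker\chi)$. Since $\ker\chi$ is proper and $I^e = \Ext^1_\Z(\Pic^0_K,G)$, we get $r_\chi<e$. \Cref{thm:local-gf}(a) writes the local factor as
\[
	\Bigl(\prod_{i<r_\chi} H_{P,i}(X_i)\Bigr)\,(1-Q_P^{d_{r_\chi}}X_{r_\chi})\,B_{P,\chi}(X_{r_\chi},\dots,X_{e-1}).
\]
Substituting $X_i\mapsto X_i^{\deg(P)}$ and multiplying over all $P$:
\begin{itemize}
	\item the first factor gives $\prod_{i<r_\chi} H_i(X_i)$;
	\item the middle factor gives $\prod_P (1-q^{d_{r_\chi}\deg(P)}X_{r_\chi}^{\deg(P)}) = 1/Z_K(q^{d_{r_\chi}}X_{r_\chi})$;
	\item the last factor gives $B_\chi := \prod_P B_{P,\chi}(X_{r_\chi}^{\deg(P)},\dots,X_{e-1}^{\deg(P)})$.
\end{itemize}
By \Cref{thm:local-gf}(b), $B_\chi$ is a product of only finitely many nontrivial polynomials, each with integer coefficients and constant term $1$. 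Hence $B_\chi$ is itself a polynomial with integer coefficients and constant coefficient $1$. When $g_K\le1$, \Cref{thm:local-gf}(c) gives $B_{P,\chi}=1$ for all $P$, hence $B_\chi=1$.

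Summing over the finitely many characters of the finite group $\Ext^1_\Z(\Pic^0_K,G)$ yields the displayed formula. Rationality then follows because $Z_K$ is a rational function, so each $H_i$ and each $1/Z_K(q^{d_{r_\chi}}X_{r_\chi})$ is rational, and a finite sum of products of rational functions and polynomials is rational.

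The main obstacle is the passage from the infinite product of formal power series to the product of the separately evaluated pieces. I would justify it by noting that every local factor is $1$ plus terms of total degree at least $\deg(P)$, so all the infinite products converge in the $(X_0,\dots,X_{e-1})$-adic topology, and products of convergent products may be regrouped. Everything else is already supplied by \Cref{thm:local-gf}.
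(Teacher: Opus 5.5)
Your proposal is correct and follows the paper's own proof: it combines the character-sum lemma with the local factorization theorem, evaluates the Euler products factor by factor via $Z_K$, and gets polynomiality of $B_\chi$ from the fact that $B_{P,\chi}=1$ for all but finitely many $P$. Your remarks on formal convergence of the infinite products and on why $0\leq r_\chi<e$ (because $\ker\chi$ is proper while $I^e$ is all of $\operatorname{Ext}^1_{\mathbb Z}(\operatorname{Pic}^0_K,G)$) fill in details the paper leaves implicit.
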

\begin{proof}
	Let $r_\chi := r(\ker\chi)$.
	By \Cref{lem:character-sum} and \Cref{thm:local-gf}, we obtain
	\begin{align*}
		&F_K(X_0,\dots,X_{e-1}) \\
		&= \sum_{\chi}
			\prod_{P\in M_K}
			F_{K_P,\chi}\Bigl(X_0^{\deg(P)}, \dots, X_{e-1}^{\deg(P)}\Bigr) \\
		&= \prod_{P\in M_K}
			\prod_{i=0}^{e-1}
				H_{P,i}(X_i^{\deg(P)}) \\
		&\quad+ \sum_{\chi\neq\triv}
			\prod_{P\in M_K}
				\left(\prod_{i=0}^{r_\chi-1}
				H_{P,i}\left(X_i^{\deg(P)}\right)\right)
				\cdot (1 - (q^{d_{r_\chi}} X_{r_\chi})^{\deg(P)})
				\cdot B_{P,\chi}(X_{r_\chi}^{\deg(P)},\dots,X_{e-1}^{\deg(P)}) \\
		&= \prod_{i=0}^{e-1}
			H_i(X_i)
		+ \sum_{\chi\neq\triv}
			\left(\prod_{i=0}^{r_\chi-1}
			H_i(X_i)\right)
			\cdot \frac1{Z_K(q^{d_{r_\chi}} X_{r_\chi})}
			\cdot \prod_{P\in M_K} B_{P,\chi}(X_{r_\chi}^{\deg(P)},\dots,X_{e-1}^{\deg(P)}).
	\end{align*}
	Here, $B_\chi(X_r,\dots,X_{e-1}) := \prod_{P\in M_K} B_{P,\chi}(X_r^{\deg(P)},\dots,X_{e-1}^{\deg(P)})$ is always a polynomial and $B_\chi = 1$ if $g_K\leq1$.
\end{proof}

\begin{example}
	Let $G = C_p$, so $e=1$, and let $K$ be the function field of an elliptic curve $E$ over $\F_q$. Then, $\Pic_K^0 \simeq E(\F_q)$. Say $E(\F_q)[p] \simeq C_p^a$. According to \cite[\href{https://stacks.math.columbia.edu/tag/0C1Z}{Lemma 0C1Z~(2)}]{stacks-project}, we always have $a\in\{0,1\}$. By \Cref{rmk:ext-made-concrete}, we then have $\Ext^1_\Z(\Pic_K^0, G) \simeq C_p^a$. It follows that the generating function is
	\[
		F_K(X) = \frac{Z_K(qX) Z_K(q^{p-1} X^p)}{Z_K(q^p X^p) Z_K(X)} + (p^a-1)\cdot\frac{1}{Z_K(X)}.
	\]
\end{example}

\section{Poles and coefficient asymptotics}

We now determine the innermost poles of $F_K(X_0,\dots,X_{e-1})$ and deduce asymptotic statements on the number of sub-$G$-extensions with prescribed heights.

First, consider the index $i$ factor
	\[
		H_i(X_i)
		= \frac{
			Z_K(q^{c_i} X_i) Z_K(q^{d_{i+1}} X_i^p)
		}{
			Z_K(q^{p c_i} X_i^p) Z_K(q^{d_i} X_i)
		}.
	\]
appearing in \Cref{thm:main}. We show that its innermost pole is a simple pole at $X_i = q^{-1-c_i}$:

\begin{lemma}\label{lem:factor-cont}
	Let $0\leq i\leq e-1$.
	\begin{enumalpha}
	\item\label{lem:factor-cont:a}
		The rational function $\tilde H_i(X_i) := H_i(X_i)\cdot(1-q^{1+c_i}X_i)$ is holomorphic in (an open neighborhood of) the disc $\{|X_i|\leq q^{-1-c_i}\}$ and satisfies
		\[
			C_i
			:= \tilde H_i(q^{-1-c_i})
			= \frac{
				(-q\Res_{X=q^{-1}}Z_K(X))
				Z_K(q^{-p+d_{i+1}-pc_i})
			}{
				Z_K(q^{-p})
				Z_K(q^{-1+d_i-c_i})
			}
			> 0.
		\]
	\item\label{lem:factor-cont:b}
		The $X_i^{n_i}$-coefficient of $H_i(X_i) =: \sum_{n_i\geq0} N_i(n_i) X_i^{n_i}$ satisfies
		\[
			N_i(n_i) \sim C_i q^{(1+c_i)n_i} \qquad\textnormal{for }n_i\to\infty.
		\]
	\end{enumalpha}
\end{lemma}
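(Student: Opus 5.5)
The plan is to use the standard shape of the Hasse--Weil zeta function. By the Weil conjectures (Riemann hypothesis for curves over finite fields), $Z_K(X) = L_K(X)/((1-X)(1-qX))$, where $L_K$ is a polynomial of degree $2g_K$ whose roots all have absolute value $q^{-1/2}$. Consequently $Z_K$ is holomorphic and nonvanishing on $\{|X|<q^{-1/2}\}$, apart from the simple pole at $X=q^{-1}$. More precisely, $Z_K$ is holomorphic and nonzero on the open disc $\{|X|<q^{-1}\}$. It is also meromorphic on $\{|X|<1\}$ with its only pole in that disc the simple pole at $q^{-1}$, and with no zeros in $\{|X|<q^{-1/2}\}$.

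For part \ref{lem:factor-cont:a}, I will check each of the four factors of $H_i$ on the closed disc $|X_i|\le q^{-1-c_i}$, and on a slightly larger open disc by strictness of the inequalities below.
\begin{itemize}
\item For the factor $Z_K(q^{c_i}X_i)$, the argument has absolute value $\le q^{-1}$. The only singularity is the simple pole at $X_i=q^{-1-c_i}$, and multiplying by $(1-q^{1+c_i}X_i)$ removes it.
\item For the factor $Z_K(q^{d_{i+1}}X_i^p)$, the argument has absolute value $\le q^{d_{i+1}-p-pc_i}$. By \Cref{rmk:cd}\ref{rmk:cd:diff} we have $pc_i-d_{i+1}>0$, so this bound is at most $q^{-p-1}<q^{-1}$, and the factor is holomorphic.
\item For the denominator factor $Z_K(q^{pc_i}X_i^p)$, the argument has absolute value $\le q^{-p}$.
\item For the denominator factor $Z_K(q^{d_i}X_i)$, the argument has absolute value $\le q^{d_i-c_i-1}\le q^{-2}$, again by \Cref{rmk:cd}\ref{rmk:cd:diff}.
\end{itemize}
Both denominator arguments therefore lie strictly inside $\{|X|<q^{-1/2}\}$ and away from $q^{-1}$, where $Z_K$ is holomorphic and nonzero. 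So the denominators do not vanish and $\tilde H_i$ is holomorphic near the closed disc.

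To evaluate $\tilde H_i$ at $X_i=q^{-1-c_i}$, I substitute directly. The numerator factor becomes $\lim_{X\to q^{-1}}(1-qX)Z_K(X)=-q\Res_{X=q^{-1}}Z_K(X)$, and the other three factors become the stated values of $Z_K$. For positivity, note that for real $0<y<q^{-1}$ the value $Z_K(y)=\sum_{D\ge0}y^{\deg D}$ is a sum of positive terms, so it is positive. Also $(1-qy)Z_K(y)$ is positive for such $y$, so its limit $-q\Res$ is $\ge0$. This limit is nonzero because the pole is genuinely simple: it equals $L_K(q^{-1})/(1-q^{-1})\ne0$, since $L_K$ has no roots of absolute value $q^{-1}$.

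For part \ref{lem:factor-cont:b}, I write
\[
H_i(X_i)=\frac{C_i}{1-q^{1+c_i}X_i}+R_i(X_i),
\qquad R_i := \frac{\tilde H_i(X_i)-C_i}{1-q^{1+c_i}X_i}.
\]
By part \ref{lem:factor-cont:a}, the function $R_i$ is holomorphic on a disc $\{|X_i|<q^{-1-c_i}+\varepsilon\}$. Here I use that $H_i$ is rational, so it has only finitely many poles, and none of them lie on the closed disc other than $q^{-1-c_i}$. Cauchy's estimate then bounds the coefficients of $R_i$ by $O\bigl((q^{-1-c_i}+\varepsilon/2)^{-n_i}\bigr)=o\bigl(q^{(1+c_i)n_i}\bigr)$. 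The first term contributes exactly $C_iq^{(1+c_i)n_i}$, which gives the asymptotic. The only delicate point is the pole-free, zero-free region of $Z_K$, and the Riemann hypothesis for curves makes that immediate.
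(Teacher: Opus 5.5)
Your proposal is correct and follows essentially the same route as the paper: you check each of the four zeta factors on the disc using $c_i>d_i$ and $pc_i>d_{i+1}$ from \Cref{rmk:cd}, then subtract the geometric series $C_i/(1-q^{1+c_i}X_i)$ and bound the remainder's coefficients. You also justify the pole/zero-free region and the sign of the residue, via the Riemann hypothesis for curves and positivity of $\sum_{D\geq0} y^{\deg D}$, whereas the paper simply quotes these facts as well known. The Riemann hypothesis is more than you need here: the rational form of $Z_K$, the Euler product on $|X|<q^{-1}$, and $L_K(q^{-1})\neq 0$ already suffice.
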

\begin{proof}
	\begin{enumalpha}
	\item
		In the disc $\{|X|\leq q^{-1}\}$, the Hasse--Weil zeta function $Z_K(X)$ has a simple pole at $X=q^{-1}$ with negative residue and no other poles or zeros. The claim follows since we have $c_i > d_i$ and $pc_i \geq c_{i+1}\geq d_{i+1}$ by \Cref{rmk:cd}\ref{rmk:cd:diff}\ref{rmk:cd:diff2}.
	\item
		The idea is to subtract the appropriate geometric series from $H_i(X_i)$ to eliminate the pole at $X_i=q^{-1-c_i}$ and then use the convergence criterion for power series. (See for example \cite[Theorem~IV.9]{flajolet-sedgewick-analytic-combinatorics}.)
		The rational function $H_i(X_i) - C_i / (1-q^{1+c_i}X_i)$ is holomorphic on an open neighborhood of the disc $\{|X_i|\leq q^{-1-c_i}\}$. Considering this function as a power series in $X_i$, we conclude that its radius of convergence is strictly larger than $q^{-1-c_i}$, so its $X_i^{n_i}$-coefficient is $o(q^{(1+c_i)n_i})$. The $X_i^{n_i}$-coefficient of the geometric series $C_i/(1-q^{1+c_i}X_i)$ we subtracted is $C_iq^{(1+c_i)n_i}$.
	\qedhere
	\end{enumalpha}
\end{proof}

Next, we consider the factors $1/Z_K(q^{d_r}X_r)$ appearing in \Cref{thm:main}:

\begin{lemma}\label{lem:extra-cont}
	Let $0\leq i<e$.
	\begin{enumalpha}
	\item\label{lem:extra-cont:a}
		The rational function $1/Z_K(q^{d_i}X_i)$ is holomorphic in (an open neighborhood of) the disc $\{|X_i|\leq q^{-1-c_i}\}$.
	\item\label{lem:extra-cont:b}
		The $X_i^{n_i}$-coefficient of $1/Z_K(q^{d_i}X_i) =: \sum_{n_i\geq0} M_i(n_i) X_i^{n_i}$ satisfies
		\[
			M_i(n_i) = o(q^{(1+c_i)n_i}) \qquad\textnormal{for }n_i\to\infty.
		\]
	\end{enumalpha}
\end{lemma}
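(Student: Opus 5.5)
For part~\ref{lem:extra-cont:a}, I will use the same facts about $Z_K$ as in the proof of \Cref{lem:factor-cont}\ref{lem:factor-cont:a}. The Hasse--Weil zeta function can be written as $Z_K(X) = L_K(X)/((1-X)(1-qX))$, where $L_K$ is a polynomial whose roots all have absolute value $q^{-1/2}$. Consequently $1/Z_K(X) = (1-X)(1-qX)/L_K(X)$ is a rational function whose only poles lie on the circle $|X| = q^{-1/2}$. In particular, $1/Z_K(X)$ is holomorphic on the open disc $\{|X|<q^{-1/2}\}$.

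Substituting $X = q^{d_i}X_i$, it then suffices to show that every $X_i$ with $|X_i|\leq q^{-1-c_i}$ satisfies $|q^{d_i}X_i| < q^{-1/2}$. For such $X_i$ we have $|q^{d_i}X_i|\leq q^{d_i-c_i-1}$. By \Cref{rmk:cd}\ref{rmk:cd:diff}, $c_i-d_i\geq1$ for $0\leq i\leq e-1$, so $q^{d_i-c_i-1}\leq q^{-2}<q^{-1/2}$. Since the inequality is strict, holomorphy extends to an open neighbourhood of the closed disc. In fact $1/Z_K(q^{d_i}X_i)$ is holomorphic on $|X_i|<q^{-1/2-d_i}$, which is a strictly larger radius than $q^{-1-c_i}$.

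For part~\ref{lem:extra-cont:b}, I will apply the same convergence criterion as in \Cref{lem:factor-cont}\ref{lem:factor-cont:b}. The power series $\sum M_i(n_i)X_i^{n_i}$ has radius of convergence strictly larger than $q^{-1-c_i}$. Pick $\rho$ with $q^{-1-c_i}<\rho<q^{-1/2-d_i}$. Then $M_i(n_i)=O(\rho^{-n_i})=o(q^{(1+c_i)n_i})$.

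There is no real obstacle here. The only point needing care is to invoke the correct location of the zeros of $Z_K$, namely the Riemann hypothesis for curves. Even without it, the cruder fact that $Z_K$ has no zeros in $|X|\leq q^{-1}$ (already used in \Cref{lem:factor-cont}) suffices, because $q^{d_i-c_i-1}\leq q^{-2}<q^{-1}$ and $Z_K$ has no zeros on $|X|\leq q^{-1}$.
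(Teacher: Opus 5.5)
Your proof is correct and follows essentially the same route as the paper. The paper substitutes $X=q^{d_i}X_i$, combines $c_i>d_i$ from \Cref{rmk:cd}\ref{rmk:cd:diff} with the fact that $Z_K$ has no zeros in $\{|X|\leq q^{-1}\}$ (you invoke the Riemann hypothesis for curves, which is more than needed, as you note yourself), and deduces (b) from the convergence criterion, which is exactly what your Cauchy estimate with $\rho$ does.
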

\begin{proof}
	(a) follows from $c_i>d_i$ as the Hasse--Weil zeta function $Z_K(X)$ has no zeros in the disc $\{|X|\leq q^{-1}\}$. Then, (b) follows from the convergence criterion for power series.
\end{proof}

\begin{theorem}\label{thm:Fasymp}
	Let $C_0,\dots,C_{e-1}$ be as in \Cref{lem:factor-cont}\ref{lem:factor-cont:a}.
	
	The $X_0^{n_0}\cdots X_{e-1}^{n_{e-1}}$-coefficient $N(n_0,\dots,n_{e-1})$ of $F_K(X_0,\dots,X_{e-1})$, which is by definition $1/|G|$ times the number of $\varphi\in\Hom(\Gamma_K,G)$ with $\deg(\jump_i(\varphi)-p\cdot\jump_{i+1}(\varphi))=n_i$ for all $i=0,\dots,e-1$, satisfies
	\begin{equation}\label{eq:Fasymp:a}
		N(n_0,\dots,n_{e-1})
		\sim C_0\cdots C_{e-1} \prod_{i=0}^{e-1} q^{(1+c_i) n_i}
		\qquad\textnormal{for }n_0,\dots,n_{e-1}\to\infty
	\end{equation}
	and
	\begin{equation}\label{eq:Fasymp:b}
		N(n_0,\dots,n_{e-1}) \ll \prod_{i=0}^{e-1} q^{(1+c_i)n_i}
		\qquad\textnormal{for all }n_0,\dots,n_{e-1}\geq0.
	\end{equation}
\end{theorem}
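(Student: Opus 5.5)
The plan is to start from the explicit decomposition in \Cref{thm:main} and extract coefficients term by term. The main term $\prod_{i=0}^{e-1} H_i(X_i)$ is a product of single-variable series. Its $X_0^{n_0}\cdots X_{e-1}^{n_{e-1}}$-coefficient is therefore exactly $\prod_i N_i(n_i)$. By \Cref{lem:factor-cont}\ref{lem:factor-cont:b}, each factor satisfies $N_i(n_i)\sim C_i q^{(1+c_i)n_i}$. Since each $N_i(n_i)$ is also bounded by a constant times $q^{(1+c_i)n_i}$ for all $n_i\geq0$ (a convergent sequence of ratios is bounded), the main term already gives both \eqref{eq:Fasymp:a} and \eqref{eq:Fasymp:b}. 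It remains to show that every nontrivial-character term contributes $o\bigl(\prod_i q^{(1+c_i)n_i}\bigr)$ as all $n_i\to\infty$, and $O\bigl(\prod_i q^{(1+c_i)n_i}\bigr)$ uniformly in all $n_i\geq0$.

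Fix a nontrivial $\chi$ and write $r=r_\chi<e$. Its term is
\[
\Bigl(\prod_{i<r}H_i(X_i)\Bigr)\cdot \frac{B_\chi(X_r,\dots,X_{e-1})}{Z_K(q^{d_r}X_r)}.
\]
The variables $X_0,\dots,X_{r-1}$ again factor off, with coefficients $N_i(n_i)\ll q^{(1+c_i)n_i}$. For the remaining factor, $B_\chi$ is a polynomial, so it is a finite sum of monomials $b\,X_r^{m_r}\cdots X_{e-1}^{m_{e-1}}$. Each monomial times $1/Z_K(q^{d_r}X_r)$ has coefficient $b\,M_r(n_r-m_r)$ when $n_i=m_i$ for all $i>r$, and $0$ otherwise.

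\begin{itemize}
\item \textbf{When $r<e-1$.} Once $n_{e-1}$ exceeds the $X_{e-1}$-degree of $B_\chi$, the coefficient vanishes. So this term is eventually zero as all $n_i\to\infty$.
\item \textbf{When $r=e-1$.} The coefficient is a finite combination of shifted $M_r(n_r-m_r)$. By \Cref{lem:extra-cont}\ref{lem:extra-cont:b}, this is $o(q^{(1+c_r)n_r})$.
\end{itemize}

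In both cases the contribution is $o\bigl(\prod_i q^{(1+c_i)n_i}\bigr)$ as all $n_i\to\infty$.

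For the uniform bound \eqref{eq:Fasymp:b}, note that $M_r(n)\ll q^{(1+c_r)n}$ for all $n$, again by \Cref{lem:extra-cont}. Hence the coefficient of each monomial piece at $(n_r,\dots,n_{e-1})$ is bounded by a constant times $q^{(1+c_r)n_r}$, and it is nonzero only when $n_i=m_i$ for $i>r$. Those $n_i$ are bounded, so the factors $q^{(1+c_i)n_i}$ for $i>r$ are bounded below by $1$ and can be absorbed into the constant. There are only finitely many characters and monomials, so summing gives \eqref{eq:Fasymp:b}.

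The only slightly delicate point is making "$n_0,\dots,n_{e-1}\to\infty$" precise for the product asymptotics, and I would spell it out. For the main term, write $\prod_i N_i(n_i)/(C_iq^{(1+c_i)n_i})$. Each factor tends to $1$ as its own $n_i\to\infty$, so the product tends to $1$ as all $n_i\to\infty$ jointly; here we use $C_i>0$ from \Cref{lem:factor-cont}\ref{lem:factor-cont:a}. For the error terms, write the ratio as a product of bounded factors (from the $H_i$ with $i<r$) times a factor tending to $0$. That factor is either identically zero eventually or is $M_{e-1}(n_{e-1}-m)/q^{(1+c_{e-1})n_{e-1}}\to0$. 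Dividing by the positive constant $C_0\cdots C_{e-1}$ then yields \eqref{eq:Fasymp:a}.
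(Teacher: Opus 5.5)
Your proposal is correct and follows essentially the same route as the paper. You read off coefficients from the decomposition in \Cref{thm:main}: the trivial character contributes exactly $N_0(n_0)\cdots N_{e-1}(n_{e-1})$, and each nontrivial character contributes a finite combination, over the support of $B_\chi$, of terms $N_0(n_0)\cdots N_{r_\chi-1}(n_{r_\chi-1})\,M_{r_\chi}(n_{r_\chi}-t_{r_\chi})$, which are controlled by \Cref{lem:factor-cont} and \Cref{lem:extra-cont}. Your case split $r_\chi<e-1$ versus $r_\chi=e-1$ is unnecessary but harmless: the paper simply absorbs the factors $q^{(1+c_i)n_i}\geq1$ for $i>r_\chi$ and uses $M_{r_\chi}=o(\cdot)$ in all cases.
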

\begin{proof}
	Recall the statements about $N_i(n_i)$ and $M_r(n_r)$ in \Cref{lem:factor-cont}\ref{lem:factor-cont:b} and \Cref{lem:extra-cont}\ref{lem:extra-cont:b}. Consider the formula for the generating function given in \Cref{thm:main}.

	The contribution to $N(n_0,\dots,n_{e-1})$ arising from the first summand (corresponding to the trivial character) is
	\[
		N_0(n_0)\cdots N_{e-1}(n_{e-1}),
	\]
	which is
	\[
		\sim C_0\cdots C_{e-1}\prod_{i=0}^{e-1} q^{(1+c_i) n_i}
		\qquad\textnormal{for }n_0,\dots,n_{e-1}\to\infty
	\]
	and
	\[
		\ll \prod_{i=0}^{e-1} q^{(1+c_i) n_i}
		\qquad\textnormal{for all }n_0,\dots,n_{e-1}\geq0.
	\]

	The contribution from any nontrivial character $\chi$ is
	\[
		\ll \sum_{(t_{r_\chi},\dots,t_{e-1})\in\operatorname{supp}(B_\chi)}
		N_0(n_0)\cdots N_{r_\chi-1}(n_{r_\chi-1})
		\cdot M_{r_\chi}(n_{r_\chi}-t_{r_\chi}),
	\]
	which is
	\[
		o{\left(\prod_{i=0}^{e-1} q^{(1+c_i) n_i}\right)}
		\qquad\textnormal{for }n_0,\dots,n_{e-1}\to\infty
	\]
	and
	\[
		\ll \prod_{i=0}^{e-1} q^{(1+c_i) n_i}
		\qquad\textnormal{for all }n_0,\dots,n_{e-1}\geq0.
	\qedhere
	\]
\end{proof}

\begin{corollary}\label{cor:Fcont}
	Consider the rational function
	\[
		\tilde F_K(X_0,\dots,X_{e-1})
		:= F_K(X_0,\dots,X_{e-1})\cdot\prod_{i=0}^{e-1}(1-q^{1+c_i}X_i).
	\]
	\begin{enumalpha}
	\item\label{cor:Fcont:a}
		This function is holomorphic in an open neighborhood of the polydisc $\{|X_i|\leq q^{-1-c_i}:i=0,\dots,e-1\}$ and satisfies $\tilde F_K(q^{-1-c_0},\dots,q^{-1-c_{e-1}}) = \prod_{i=0}^{e-1} C_i$ with $C_0,\dots,C_{e-1}$ as in \Cref{lem:factor-cont}\ref{lem:factor-cont:a}.
	\item\label{cor:Fcont:b}
		We have $\tilde F_K(X_0,\dots,X_{e-1})>0$ for any real numbers $0<X_i\leq q^{-1-c_i}$.
	\end{enumalpha}
\end{corollary}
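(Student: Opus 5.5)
Both parts will be read off from the explicit formula in \Cref{thm:main}, multiplied termwise by $\prod_{i=0}^{e-1}(1-q^{1+c_i}X_i)$.

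For the trivial-character summand we get $\prod_{i=0}^{e-1}\tilde H_i(X_i)$. By \Cref{lem:factor-cont}\ref{lem:factor-cont:a}, each factor is holomorphic near $\{|X_i|\leq q^{-1-c_i}\}$ with value $C_i$ at $X_i=q^{-1-c_i}$, so this summand is holomorphic on the polydisc and equals $\prod_i C_i$ at the corner point.

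For a nontrivial $\chi$ with $r=r_\chi<e$, the summand becomes
\[
	\prod_{i<r}\tilde H_i(X_i)\cdot\frac{1-q^{1+c_r}X_r}{Z_K(q^{d_r}X_r)}\cdot B_\chi(X_r,\dots,X_{e-1})\cdot\prod_{i>r}(1-q^{1+c_i}X_i).
\]
\begin{itemize}
\item The factors $\tilde H_i$ for $i<r$ are holomorphic by \Cref{lem:factor-cont}.
\item $1/Z_K(q^{d_r}X_r)$ is holomorphic by \Cref{lem:extra-cont}\ref{lem:extra-cont:a}.
\item $B_\chi$ is a polynomial, and the remaining factors are linear.
\end{itemize}
Hence each such summand is holomorphic on a neighbourhood of the polydisc. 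The factor $(1-q^{1+c_r}X_r)$ vanishes at $X_r=q^{-1-c_r}$, so these summands contribute nothing at the corner. This proves (a).

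For (b), I would not try to bound the nontrivial summands directly, since $B_\chi$ may have negative coefficients. Instead I would use positivity of the coefficients of $F_K$, which count extensions, together with the asymptotics of \Cref{thm:Fasymp}. The plan is a one-variable-at-a-time argument.

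Fix real points $0<X_i\leq q^{-1-c_i}$ and set $x_i:=q^{1+c_i}X_i\in(0,1]$. Write $F_K=\sum N(n)\prod_i X_i^{n_i}$. If all $x_i<1$, the series converges absolutely, by \eqref{eq:Fasymp:b}. The product $\prod_i(1-x_i)$ is then positive and $F_K>0$, since $N(0,\dots,0)=1$ (trivial map) and all coefficients are $\geq0$. So $\tilde F_K>0$ there.

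The main obstacle is the boundary, where some $x_i=1$. There I would argue via Abel summation. $\tilde F_K$ is continuous on the closed polydisc, so its value at a boundary point is the limit as the relevant $x_i\to1^-$. For such $i$, the normalized coefficients $N(n)\prod_i q^{-(1+c_i)n_i}$ are bounded by \eqref{eq:Fasymp:b}. They also tend to $\prod C_i>0$ as all $n_i\to\infty$, by \eqref{eq:Fasymp:a}.

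Partial Abel limits should then give $\tilde F_K$ at the point as a sum over the non-boundary indices of nonnegative terms times an asymptotic constant. I would try to make this precise by showing the following limit statement. In the variables with $x_i=1$, the limit of $\prod(1-x_i)$ times the series equals a positive weighted sum over the remaining $n_i$ of the limits of the normalized $N(n)$ as the boundary $n_i\to\infty$.

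Those limits are nonnegative, and positive for the trivial-character main term. Equivalently, the sum $\sum_\chi$ of partial main terms is a limit of nonnegative counts, and the trivial one dominates. The step requiring care is that \eqref{eq:Fasymp:a} needs all $n_i\to\infty$. I would handle this by repeating the coefficient analysis of \Cref{thm:Fasymp} with only a subset of the $n_i$ tending to infinity. That yields limits that are nonnegative (being limits of nonnegative numbers) and at least the positive contribution coming from $n_j=0$ in the other variables. This gives strict positivity.
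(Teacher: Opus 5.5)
Part (a) is correct and is exactly the paper's argument. Your interior case of (b) is also fine. The gap is at boundary points of the polydisc, which you rightly call the main obstacle but do not resolve; continuity from the interior only gives $\tilde F_K\geq 0$ there.

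To get strict positivity, your plan needs two things you do not supply. First, it needs asymptotics for $N(n)$ when only the indices in $J:=\{j : x_j=1\}$ tend to infinity, and \Cref{thm:Fasymp} does not provide these. Second, it needs the resulting limit at $(n_i)_{i\notin J}=0$ to be strictly positive, which you assert rather than prove.

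That limit is in fact positive, but only after going back into \Cref{thm:main}. The nontrivial characters with $r_\chi\leq\max J$ drop out, while those with $r_\chi>\max J$ survive. Since every factor has constant term $1$, the limit equals $\prod_{j\in J}C_j\cdot\bigl(1+\#\{\chi\neq\triv : r_\chi>\max J\}\bigr)$. Your remark that ``the trivial one dominates'' cannot replace this computation: the surviving nontrivial-character terms involve $1/Z_K$ and $B_\chi$, whose coefficients can be negative. Nonnegativity of the partial limits by itself again yields only $\tilde F_K\geq0$.

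The missing idea makes partial asymptotics unnecessary. Because every $N(n)$ is nonnegative, you may discard all terms except those with $n_i\geq M$ for \emph{every} $i$. Choose $M$ via \eqref{eq:Fasymp:a} so that $N(n)\geq\frac12 C_0\cdots C_{e-1}\prod_i q^{(1+c_i)n_i}$ on that range. Summing the resulting product of geometric series gives, for $0<x_i<1$,
\[
	\tilde F_K(X_0,\dots,X_{e-1}) \geq \frac12\, C_0\cdots C_{e-1}\prod_{i=0}^{e-1} x_i^{M}.
\]
The right-hand side is continuous and strictly positive on all of $0<x_i\leq 1$. Continuity of $\tilde F_K$ from (a) therefore carries the strict bound to every boundary point at once. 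This is the paper's proof: the joint tail already produces the full factor $\prod_i(1-x_i)^{-1}$, so no Abel summation in a subset of the variables is needed.
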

\begin{proof}
	\begin{enumalpha}
	\item
		This follows immediately from \Cref{thm:main}, \Cref{lem:factor-cont}\ref{lem:factor-cont:a}, and \Cref{lem:extra-cont}\ref{lem:extra-cont:a}. (Only the trivial character contributes to the value $\tilde F_K(q^{-1-c_0},\dots,q^{-1-c_{e-1}})$.)
	\item
		The upper bound in \Cref{eq:Fasymp:b} implies that the power series
		\[
			F_K(X_0,\dots,X_{e-1}) = \sum_{n_0,\dots,n_{e-1}\geq0} N(n_0,\dots,n_{e-1}) X_0^{n_0}\cdots X_{e-1}^{n_{e-1}}
		\]
		is absolutely convergent in the polydisc $\{|X_i|<q^{-1-c_i}:i=0,\dots,e-1\}$. The asymptotic statement in \Cref{eq:Fasymp:a} implies that there is an integer $M\geq0$ such that for all $n_0,\dots,n_{e-1}\geq M$, we have $N(n_0,\dots,n_{e-1}) \geq \frac12 C_0\cdots C_{e-1} \prod_{i=0}^{e-1} q^{(1+c_i)n_i}$. But then,
		\begin{align*}
			F_K(X_0,\dots,X_{e-1})
			&\geq \sum_{n_0,\dots,n_{e-1}\geq M} \frac12 C_0\cdots C_{e-1} \prod_{i=0}^{e-1} (q^{1+c_i}X_i)^{n_i}
			= \frac12 C_0\cdots C_{e-1} \prod_{i=0}^{e-1} \frac{(q^{1+c_i}X_i)^M}{1 - q^{1+c_i}X_i}
		\end{align*}
		for any real numbers $0<X_i<q^{-1-c_i}$.
		Thus,
		\[
			\tilde F_K(X_0,\dots,X_{e-1}) \geq \frac12 C_0\cdots C_{e-1} \prod_{i=0}^{e-1} (q^{1+c_i}X_i)^M > 0
		\]
		for all real numbers $0<X_i<q^{-1-c_i}$, and this extends by continuity to $0<X_i\leq q^{-1-c_i}$.
	\qedhere
	\end{enumalpha}
\end{proof}

\begin{theorem}[cf.\ \Cref{thm:intro-main-asc}]\label{thm:main-asc}
	Let $a := 1 + \dim_{\F_p}(G[p])$.
	\begin{enumalpha}
	\item
		The generating function
		\[
			\Fasc_K(X)
			:= \frac1{|G|}\sum_{\substack{\varphi\in\Hom(\Gamma_K, G)}} X^{\deg(\asc(\varphi))}
			= F_K(X,X^p,\dots,X^{p^{e-1}})
			\in \Z\llbracket X\rrbracket.
		\]
		is rational.
	\item
		Its only innermost pole is a simple pole at $X = q^{-a}$ with negative residue.
	\item
		We have the following asymptotic statement for some constant $C > 0$:
		\[
			|\{\varphi\in\Hom(\Gamma_K, G) : \deg(\asc(\varphi)) = n\}|
			\sim C q^{a n}
			\qquad\textnormal{ for }n\rightarrow\infty.
		\]
	\end{enumalpha}
\end{theorem}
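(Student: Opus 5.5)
The plan is to reduce everything to the multivariate results already established, via the substitution $X_i = X^{p^i}$.

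\textbf{Step 1: the identity in (a).} First I would verify $F^{\asc}_K(X) = F_K(X,X^p,\dots,X^{p^{e-1}})$. By \Cref{lem:jump-restrictions}(a), $\jump_e(\varphi)=0$, so the sum telescopes:
\[
	\asc(\varphi) = \jump_0(\varphi) = \sum_{i=0}^{e-1} p^i\bigl(\jump_i(\varphi)-p\jump_{i+1}(\varphi)\bigr).
\]
Taking degrees gives $\deg\asc(\varphi) = \sum_i p^i n_i$, where $n_i$ are the multivariate heights. This proves the identity, and rationality then follows immediately from \Cref{thm:main}.

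\textbf{Step 2: locating the innermost pole.} Note that $a = 1+c_0$ by \Cref{rmk:cd}\ref{rmk:cd:0}. The key inequality is
\[
	a\,p^i > 1+c_i \qquad\text{for } 1\le i\le e-1.
\]
Iterating $pc_j\ge c_{j+1}$ from \Cref{rmk:cd}\ref{rmk:cd:diff2} gives $c_i\le p^ic_0$. Hence $1+c_i\le 1+p^ic_0 < p^i(1+c_0)$ for $i\ge1$.

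It follows that for $|X|\le q^{-a}$ we have $|X^{p^i}|\le q^{-ap^i}<q^{-1-c_i}$ for $i\ge1$. For $i=0$, we have $|X|\le q^{-1-c_0}$. So the map $X\mapsto(X,X^p,\dots,X^{p^{e-1}})$ sends a small neighbourhood of the disc $\{|X|\le q^{-a}\}$ into the neighbourhood of the polydisc on which $\tilde F_K$ is holomorphic by \Cref{cor:Fcont}\ref{cor:Fcont:a}.

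Write
\[
	F^{\asc}_K(X)(1-q^aX) = \frac{\tilde F_K(X,X^p,\dots,X^{p^{e-1}})}{\prod_{i=1}^{e-1}\bigl(1-q^{1+c_i}X^{p^i}\bigr)}.
\]
The denominator has no zeros on a neighbourhood of the disc, by the strict inequality above. Therefore $F^{\asc}_K(X)(1-q^aX)$ is holomorphic on a neighbourhood of $\{|X|\le q^{-a}\}$. Consequently $F^{\asc}_K$ has at most a simple pole at $q^{-a}$ and no other poles in that closed disc.

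\textbf{Step 3: the value at $q^{-a}$ and the sign of the residue.} This is the step where care is needed: one must show the pole really occurs, with the right sign. Evaluate at $X=q^{-a}$. The point $(q^{-a},q^{-ap},\dots)$ has real coordinates satisfying $0<q^{-ap^i}\le q^{-1-c_i}$. So \Cref{cor:Fcont}\ref{cor:Fcont:b} gives $\tilde F_K(q^{-a},\dots)>0$. Each denominator factor $1-q^{1+c_i-ap^i}$ is also positive, by the strict inequality.

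Hence the value $V := \bigl(F^{\asc}_K(X)(1-q^aX)\bigr)\big|_{X=q^{-a}}$ is positive. The residue of $F^{\asc}_K$ at $q^{-a}$ is $-q^{-a}V<0$, which proves (b).

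\textbf{Step 4: asymptotics.} For (c), argue as in \Cref{lem:factor-cont}\ref{lem:factor-cont:b}. The function $F^{\asc}_K(X)-V/(1-q^aX)$ is holomorphic on a neighbourhood of the closed disc, so its coefficients are $o(q^{an})$. The coefficient of $X^n$ in $F^{\asc}_K$ is $1/|G|$ times the count in (c). This yields (c) with $C=|G|\,V>0$.
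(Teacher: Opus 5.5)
Your proposal is correct and follows the paper's proof essentially step for step. It uses the same telescoping identity for $\asc(\varphi)$, the same factorization through $\tilde F_K$ from \Cref{cor:Fcont}, the same bound $c_i\le p^ic_0$ to keep the $i\geq1$ factors nonvanishing on the closed disc, and the same positivity of $\tilde F_K$ at $(q^{-a},q^{-ap},\dots)$ to get a negative residue; the only differences are that you isolate the $i=0$ factor explicitly and write out the constant $C=|G|\,V$.
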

\begin{proof}
	\begin{enumalpha}
	\item
		Since
		\[
			\asc(\varphi) = \jump_0(\varphi) = \sum_{i=0}^{e-1} (\jump_i(\varphi) - p\jump_{i+1}(\varphi))\cdot p^i,
		\]
		we have
		\begin{align*}
			\Fasc_K(X)
			&= \frac{1}{|G|} \sum_{\varphi\in\Hom(\Gamma_K,G)} X^{\deg(\asc(\varphi))} \\
			&= \frac{1}{|G|} \sum_{\varphi\in\Hom(\Gamma_K,G)} X^{\sum_{i=0}^{e-1}\deg(\jump_i(\varphi)-p\jump_{i+1}(\varphi))\cdot p^i} \\
			&= F_K(X,X^p,\dots,X^{p^{e-1}}).
		\end{align*}
		 Hence, the rationality of the single-variable generating function $\Fasc_K(X)$ follows immediately from the rationality of the multivariate generating function $F_K(X_0,\dots,X_{e-1})$.
	\item
		This follows from \Cref{cor:Fcont} and the equality
		\[
			\Fasc_K(X)
			= \frac{\tilde F_K(X,X^p,\dots,X^{p^{e-1}})}{\prod_{i=0}^{e-1}\left(1-q^{1+c_i}X^{p^i}\right)}.
		\]
		The pole comes from the factor corresponding to $i=0$. The poles arising from the factors with $i>0$ have a larger modulus as by \Cref{rmk:cd}\ref{rmk:cd:0}\ref{rmk:cd:diff2}, we have
		\[
			\dim_{\F_p}(G[p]) = c_0 \geq c_1/p \geq c_2/p^2 \geq \dots > 0
		\]
		and thus $|X|\leq q^{-a}$ implies $|q^{1+c_i}X^{p^i}| \leq q^{1+c_i-ap^i} = q^{1-p^i+c_i-c_0 p^i} \leq q^{1-p^i} < 1$.

		The value of the numerator at $X=q^{-a}$ is positive by \Cref{cor:Fcont}\ref{cor:Fcont:b}, so the residue of $\Fasc_K(X)$ at this point is negative.
	\item
		The asymptotic statement on the coefficients of the power series $\Fasc_K(X)$ again follows by subtracting the appropriate geometric series from $\Fasc_K(X)$ to remove the innermost pole. (See for example \cite[Theorem~IV.9]{flajolet-sedgewick-analytic-combinatorics}.)
	\qedhere
	\end{enumalpha}
\end{proof}

\bibliographystyle{alphaurl}
\bibliography{references}

\end{document}